\bmdefine{\sss}{s}
\bmdefine{\vvv}{v}
\DeclareMathAlphabet{\mathscr}{U}{rsfs}{m}{n}
\newcommand{\msPPP}{\mathscr{P}}
\newcommand{\msXXX}{\mathscr{X}}
\newcommand{\msKKK}{\mathscr{K}}
\newcommand{\NNN}{\mathbb{N}}
\newcommand{\ZZZ}{\mathbb{Z}}
\newcommand{\QQQ}{\mathbb{Q}}
\newcommand{\RRR}{\mathbb{R}}
\newcommand{\KKK}{\mathbb{K}}
\newcommand{\pppp}{\mathfrak{p}}
\newcommand{\tUUUUU}{{{t}\mathcal U}}
\newcommand{\cok}{\mathrm{Cok}}
\newcommand{\image}{\mathrm{Im}}
\newcommand{\define}{\mathrel{:=}}
\newcommand{\gor}{Gorenstein}
\newcommand{\cm}{Cohen-Macaulay}
\newcommand{\relint}{{\rm{relint}}}
\newcommand{\trace}{{\mathrm{tr}}}
\newcommand{\Div}{{\mathrm{Div}}}
\renewcommand{\hom}{{\mathrm{Hom}}}
\newcommand{\spec}{{\mathrm{Spec}}}
\newcommand{\aff}{\mathrm{aff}}
\newcommand{\conv}{\mathrm{conv}}
\newcommand{\stab}{\mathrm{STAB}}
\newcommand{\ktmu}{{\KKK[T^{\mu_0}, T^{\mu_1}, \ldots, T^{\mu_{2\ell}}]}}
\newcommand{\xip}{\xi^{+}}
\newcommand{\ekp}{E_\KKK[\msPPP]}
\newcommand{\eksg}{{E_\KKK[\stab(G)]}}
\newtheorem{thm}{Theorem}[section]
\newtheorem{fact}[thm]{Fact}
\newtheorem{lemma}[thm]{Lemma}
\newtheorem{definition}[thm]{Definition}
\newtheorem{conj}[thm]{Conjecture}
\newcommand{\bigzerou}{\smash{\lower1.7ex\hbox{\bg 0}}}
\newcommand{\bigastu}{\smash{\lower1.7ex\hbox{\bg *}}}
\numberwithin{equation}{section}
\newcommand{\mylabel}[1]{{\label{#1}\tt [#1]}}
\let\mylabel=\label
\title{%
Non-\gor\ locus and almost \gor\ property of the Ehrhart ring of
the stable set polytope of a cycle graph%
}
\author{Mitsuhiro Miyazaki%
\footnote{Partially supported by JSPS KAKENHI JP20K03556.}
}
\date{Kyoto University of Education\\
\tt e-mail:mmyzk7@gmail.com}
\begin{document}

\maketitle

\sloppy

\begin{abstract}
Let $R$ be the Ehrhart ring of the stable set polytope of a cycle graph which is not \gor.
We describe the non-\gor\ locus of $\spec R$.
Further, we show that $R$ is almost \gor.
Moreover, we show that the conjecture of Hibi and Tsuchiya is true.
\\
MSC: 13H10, 52B20, 05E40, 05C17, 05C38\\
 {\bf Keywords}: cycle graph, almost \gor, non-\gor\ locus, stable set polytope, Ehrhart ring
\end{abstract}

\section{Introduction}

In this paper, we call a simple graph consisting of exactly one cycle a cycle graph.
An even cycle graph, i.e., a cycle graph with even vertices is bipartite and therefore is perfect.
By the result of Ohsugi and Hibi \cite[Theorem 2.1 (b)]{oh2}, the Ehrhart ring of the stable set 
polytope of a perfect graph is \gor\ if and only if sizes of maximal cliques are constant.
In particular, the Ehrhart ring of the stable set polytope of an even cycle graph is \gor.

On the other hand, in the course of studying the h-vector of graded \cm\ rings, Hibi and Tsuchiya
\cite[Theorem 1]{ht}
showed that the Ehrhart ring of the stable set polytope of an odd cycle graph is \gor\ if and only
if the size of the cycle is less than or equal to 5.
They used the fact that cycle graphs are t-perfect.
Later the present author vastly generalized this result to general t-perfect graphs and 
characterized when the Ehrhart ring of the stable set polytope is \gor\ completely \cite{miy2}:
the Ehrhart ring of the stable set polytope of a t-perfect graph $G=(V,E)$ is \gor\ if and
only if
(i)
$E=\emptyset$,
(ii)
$G$ has no isolated vertex nor triangle and there is no odd cycle
without chord and length at least 7 or
(iii)
every maximal clique of $G$ has size at least 3
and there is no odd cycle without chord and length at least 5.

In this paper, we study the Ehrhart ring of the stable set polytope of a cycle graph
which is not \gor.
Our main tool of research is the trace ideal of the canonical module.
Herzog, Hibi and Stamate \cite[Lemma 2.1]{hhs} showed that if $R$ is a \cm\ local 
or graded ring over a field with canonical module $\omega_R$, 
then $R_\pppp$ is \gor\ if and only if $\pppp\not\supset\trace(\omega_R)$ for
$\pppp\in\spec R$, where $\trace(\omega_R)$ is the trace of $\omega_R$.
In particular, $\trace(\omega_R)$ is a defining ideal of the non-\gor\ locus
of $R$.
Therefore, one can study the non-\gor\ locus of $\spec R$ of a \cm\ ring $R$
by examining the trace of its canonical module.

In this paper, we first describe the non-\gor\ locus of the $\spec R$
of the Ehrhart ring $R$ of the stable set polytope of an odd cycle graph with length
at least 7.
We describe minimal prime ideals of $\trace(\omega_R)$ explicitly and show that all
of them have exactly the half dimension of the dimension of $R$.

Next we analyze the structure of $\omega_R$ more precisely.
We show that there is a unique monomial in $\omega_R$ with minimal degree.
Moreover, we show that there is a unique system of generators 
of $\omega_R$ consisting of monomials.
By analyzing the margin of monomials in $\omega_R$ with respect to the
odd cycle condition of t-perfect graphs, we classify the monomials in $\omega_R$
and express the structure of $\omega_R$ by using this classification.
Using this expression of the structure of $\omega_R$, we show that $R$
is an almost \gor\ graded ring
(for the definition of almost \gor\ property, see \cite{gtt}).

Finally, we study the Ulrich  module appeared in the investigation of almost \gor\
property and show that the conjecture of Hibi and Tsuchiya \cite[Conjecture 1]{ht}
is true.


\section{Preliminaries}

\mylabel{sec:pre}

In this section, we establish notation and terminology.
For unexplained terminology of commutative algebra, we consult \cite{bh} and 
of graph theory, we consult \cite{die}.

In this paper, all rings and algebras are assumed to 
be commutative 
with an identity element.
Further, all graphs are assumed to be finite, simple and without loop.
We denote the set of nonnegative integers, 
the set of integers, 
the set of rational numbers and 
the set of real numbers
by $\NNN$, $\ZZZ$, $\QQQ$ and $\RRR$ respectively.

For a set $X$, we denote by $\#X$ the cardinality of $X$.
For sets $X$ and $Y$, we define $X\setminus Y\define\{x\in X\mid x\not\in Y\}$.
For nonempty sets $X$ and $Y$, we denote the set of maps from $X$ to $Y$ by $Y^X$.
If $X$ is a finite set, we identify $\RRR^X$ with the Euclidean space 
$\RRR^{\#X}$.
For $f$, $f_1$, $f_2\in\RRR^X$ and $a\in \RRR$,
we define
maps $f_1\pm f_2$ and $af$ 
by
$(f_1\pm f_2)(x)=f_1(x)\pm f_2(x)$ and
$(af)(x)=a(f(x))$
for $x\in X$.
Let $A$ be a subset of $X$.
We define the characteristic function $\chi_A\in\RRR^X$ of $A$ by
$\chi_A(x)=1$ for $x\in A$ and $\chi_A(x)=0$ for $x\in X\setminus A$.
For a nonempty subset $\msXXX$ of $\RRR^X$, we denote by $\conv\msXXX$
(resp.\ $\aff\msXXX$)
the convex hull (resp.\ affine span) of $\msXXX$.

\begin{definition}
\mylabel{def:xi+}
\rm
Let $X$ be a finite set and $\xi\in\RRR^X$.
For $B\subset X$, we set $\xip(B)\define\sum_{b\in B}\xi(b)$.
\end{definition}

A stable set of a graph $G=(V,E)$ is a subset $S$ of $V$ with no two
elements of $S$ are adjacent.
We treat the empty set as a stable set.

\begin{definition}
\rm
The stable set polytope $\stab(G)$ of a graph $G=(V,E)$ is
$$
\conv\{\chi_S\in\RRR^V\mid \mbox{$S$ is a stable set of $G$.}\}
$$
\end{definition}
Note that $\chi_{\{v\}}\in\stab(G)$ for any $v\in V$ and $\chi_\emptyset\in\stab(G)$.
In particular, $\dim\stab(G)=\#V$.

Next we fix notation about Ehrhart rings.
Let $\KKK$ be a field, $X$ a finite set  
and $\msPPP$ a rational convex polytope in $\RRR^X$, i.e., 
a convex polytope whose vertices are contained in $\QQQ^X$.
Let $-\infty$ be a new element
with $-\infty\not\in X$ and
set $X^-\define X\cup\{-\infty\}$.
Also let $\{T_x\}_{x\in X^-}$ be
a family of indeterminates indexed by $X^-$.
For $f\in\ZZZ^{X^-}$, 
we denote the Laurent monomial 
$\prod_{x\in X^-}T_x^{f(x)}$ by $T^f$.
We set $\deg T_x=0$ for $x\in X$ and $\deg T_{-\infty}=1$.
Then the Ehrhart ring of $\msPPP$ over a field $\KKK$ is the $\NNN$-graded subring
$$
\KKK[T^f\mid f\in \ZZZ^{X^-}, f(-\infty)>0, \frac{1}{f(-\infty)}f|_X\in\msPPP]
$$
of the Laurent polynomial ring $\KKK[T_x^{\pm1}\mid x\in X^-]$,
where $f|_X$ is the restriction of $f$ to $X$.
We denote the Ehrhart ring of $\msPPP$ over $\KKK$ by $\ekp$.

It is known that $\ekp$ is Noetherian and $\dim\ekp=\dim\msPPP+1$.
It is also known that $\ekp$ is
normal and \cm\ by the result of Hochster \cite{hoc}.
Moreover, 
by the description of the canonical module of a normal affine semigroup ring
by Stanley \cite[p.\ 82]{sta2}, we see 
the following.

\begin{lemma}
\mylabel{lem:sta desc}
The ideal 
$$\bigoplus_{f\in\ZZZ^{X^-}, f(-\infty)>0, \frac{1}{f(-\infty)}f|_X\in\relint\msPPP}
\KKK T^f
$$
of $\ekp$ is the canonical module of $\ekp$,
where $\relint\msPPP$ denotes the interior of $\msPPP$ in the topological space
$\aff\msPPP$.
\end{lemma}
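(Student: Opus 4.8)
The plan is to present $\ekp$ as a normal affine semigroup ring and then transcribe Stanley's description \cite[p.\ 82]{sta2} of the canonical module of such a ring; the only substantive points are two geometric identifications. Set
$$
C\define\{f\in\RRR^{X^-}\mid f(-\infty)\ge 0,\ \tfrac{1}{f(-\infty)}f|_X\in\msPPP\text{ if }f(-\infty)>0\},
$$
the cone in $\RRR^{X^-}$ spanned by the copy $\msPPP\times\{1\}$ of $\msPPP$ placed at height $f(-\infty)=1$, and put $S\define C\cap\ZZZ^{X^-}$. Since $\msPPP$ is bounded, the only $f\in C$ with $f(-\infty)=0$ is $f=0$, so $S$ consists of $0$ together with exactly the exponent vectors occurring in the definition of $\ekp$; hence $\ekp=\KKK[T^f\mid f\in S]$ is the affine semigroup ring of $S$. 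As the set of lattice points of a rational polyhedral cone, $S$ is a normal affine semigroup (finite generation was already noted above).

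By the cited description, the canonical module of $\KKK[S]$ is $\bigoplus_{f\in\relint C\,\cap\,\ZZZ S}\KKK T^f$, where $\relint C$ is the interior of $C$ in $\aff C$ and $\ZZZ S$ is the subgroup of $\ZZZ^{X^-}$ generated by $S$. The first step is to replace $\ZZZ S$ by $\ZZZ^{X^-}$: the inclusion $\ZZZ S\subseteq\aff C\cap\ZZZ^{X^-}$ is clear, and for the reverse I would fix a lattice point $v\in\relint C$ (possible because $C$ is a rational cone, hence full-dimensional in its rational affine hull $\aff C$) and note that for $w\in\aff C\cap\ZZZ^{X^-}$ and $N\gg 0$ one has $v+\tfrac1N w\in C$, so $Nv+w\in C\cap\ZZZ^{X^-}=S$ and hence $w=(Nv+w)-Nv\in\ZZZ S$. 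This gives $\ZZZ S=\aff C\cap\ZZZ^{X^-}$, whence (using $\relint C\subseteq\aff C$) $\relint C\cap\ZZZ S=\relint C\cap\ZZZ^{X^-}$, so the canonical module is $\bigoplus_{f\in\relint C\,\cap\,\ZZZ^{X^-}}\KKK T^f$.

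The second step identifies $\relint C\cap\ZZZ^{X^-}$ with the index set of the lemma, i.e.\ shows $f\in\relint C$ iff $f(-\infty)>0$ and $\tfrac{1}{f(-\infty)}f|_X\in\relint\msPPP$. Since $\msPPP\ne\emptyset$ we have $C\ne\{0\}$, hence $0\notin\relint C$; as $0$ is the only point of $C$ with $f(-\infty)=0$, every $f\in\relint C$ has $f(-\infty)>0$. Multiplication by a positive scalar is a homeomorphism of $\aff C$ preserving $C$ and $\relint C$, so it suffices to treat the height-one slice: with $H\define\{g\in\RRR^{X^-}\mid g(-\infty)=1\}$ one has $C\cap H=\msPPP\times\{1\}$, and since $\relint C$ meets $H$ (at $(p,1)$ for $p\in\relint\msPPP$), the identity $\relint(C\cap H)=\relint C\cap\relint H$ (valid because the relative interiors intersect) together with $\relint H=H$ yields $\relint C\cap H=\relint(\msPPP\times\{1\})=\relint\msPPP\times\{1\}$; scaling back completes the identification and hence the proof.

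I expect the mild obstacle to be exactly this last passage between $\relint C$ and $\relint\msPPP$, together with the companion point $\ZZZ S=\aff C\cap\ZZZ^{X^-}$: both rest only on standard convexity facts, but they must be handled with a little care, since the statement is literally false if $\relint$ is read as ordinary interior or if $\ZZZ S$ is replaced by $\ZZZ^{X^-}$ without justification. Once these are in hand the lemma is precisely the semigroup-ring formula of \cite{sta2}.
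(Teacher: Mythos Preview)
Your argument is correct. The paper itself gives no proof at all: the lemma is simply stated as a consequence of Stanley's description of the canonical module of a normal affine semigroup ring \cite[p.~82]{sta2}, with the translation from the cone formulation to the polytope formulation left entirely implicit. What you have done is spell out that translation carefully, which is exactly the content the citation is standing in for, so your approach is not different from the paper's---it is the paper's approach made explicit.

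One small remark: your first step proves more than is needed. You establish $\ZZZ S=\aff C\cap\ZZZ^{X^-}$, but to get $\relint C\cap\ZZZ S=\relint C\cap\ZZZ^{X^-}$ it suffices to note that any lattice point of $\relint C$ already lies in $C\cap\ZZZ^{X^-}=S\subset\ZZZ S$, while conversely $\ZZZ S\subset\ZZZ^{X^-}$; no interior lattice point or Gordan-type argument is required. This also removes the apparent circularity of using $(p,1)\in\relint C$ for $p\in\relint\msPPP$ before having proved the slice identification in your second step.
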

We denote the ideal of the above lemma by $\omega_{\ekp}$ and call the canonical ideal of $\ekp$.

Let $G=(V,E)$ be a graph.
$G$ is, by definition, t-perfect if
$$
\stab(G)=
\left\{f\in\RRR^V\left|\ 
\vcenter{\hsize=.5\textwidth\relax\noindent
$0\leq f(x)\leq 1$ for any $x\in V$, $f^+(e)\leq 1$ for any $e\in E$ and
$f^+(C)\leq \frac{\#C-1}{2}$ for any odd cycle $C$ in $G$}\right.\right\}.
$$

Let $G=(V,E)$ be a cycle graph with length $n$, i.e., 
$V=\{v_0, v_1, \ldots, v_{n-1}\}$,
$E=\{\{v_i, v_j\}\mid i-j\equiv 1\pmod n\}$.
It is shown by Mahjoub \cite{mah} that $G$ is t-perfect.
If $n$ is even, then $G$ is a bipartite graph and therefore is perfect.
Thus, by the result of Ohsugi and Hibi
\cite[Theorem 2.1 (b)]{oh2}, $\eksg$ is \gor.

In this paper, we study $\eksg$ the case where it is not \gor.
Therefore, we assume that the length of $G$ is $2\ell+1$, where $\ell$ is a
positive integer.
Hibi and Tsuchiya \cite[Theorem 1]{ht} (see also \cite[Corollary 3.9]{miy2})
showed that if $G$ is a cycle graph of length $2\ell+1$, then $\eksg$ is \gor\
if and only if $\ell\leq 2$.
Therefore, we mainly consider the case where $\ell \geq 3$.

Our main tool is the result of Herzog, Hibi and Stamate \cite{hhs}
about the trace of the canonical module.

\begin{definition}
\rm
Let $R$ be a ring and $M$ an $R$-module.
We set 
$$
\trace(M)\define\sum_{\varphi\in\hom(M,R)}\varphi(M)
$$
and call $\trace(M)$ the trace of $M$.
\end{definition}
We recall the following.

\begin{fact}[{\cite[Lemma 1.1]{hhs}}]
\mylabel{fact:hhs1.1}
Let $R$ be a ring and $I$ an ideal of $R$ containing an $R$-regular element.
Also let $Q(R)$ be the total quotient ring of fractions of $R$
and set $I^{-1}\define\{x\in Q(R)\mid xI\subset R\}$.
Then
$$
\trace(I)=I^{-1}I.
$$
\end{fact}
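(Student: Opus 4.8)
The plan is to first identify the $R$-module $\hom(I,R)$ with $I^{-1}$ via multiplication inside the total quotient ring $Q(R)$, and then obtain the formula for $\trace(I)$ by unwinding definitions. To set up the identification, I would check that for each $x\in I^{-1}$ the assignment $\mu_x\colon b\mapsto xb$ defines an $R$-homomorphism $I\to R$ — it takes values in $R$ precisely because $xI\subset R$ — so that $x\mapsto\mu_x$ is an $R$-linear map $I^{-1}\to\hom(I,R)$. This map is injective because $I$ contains an $R$-regular element $a$: such an $a$ is a unit in $Q(R)$, so $\mu_x=0$ forces $xa=0$ and hence $x=0$.

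The key step, and the one I expect to require the most care, is surjectivity of $x\mapsto\mu_x$. Given $\varphi\in\hom(I,R)$, I would fix an $R$-regular element $a\in I$ and set $x\define\varphi(a)/a\in Q(R)$, which makes sense since $a$ is invertible in $Q(R)$. For an arbitrary $b\in I$ one has $ab=ba\in I$, so $a\varphi(b)=\varphi(ab)=\varphi(ba)=b\varphi(a)$, and therefore $a\bigl(\varphi(b)-xb\bigr)=b\varphi(a)-b\varphi(a)=0$ in $Q(R)$; as $a$ is a unit there, $\varphi(b)=xb$ for every $b\in I$. Then $xI=\varphi(I)\subset R$ shows $x\in I^{-1}$ and $\varphi=\mu_x$; this computation simultaneously shows that $x$ does not depend on the chosen regular element $a$. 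Hence $x\mapsto\mu_x$ is an isomorphism $I^{-1}\cong\hom(I,R)$.

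Finally, from the definition of the trace,
$$
\trace(I)=\sum_{\varphi\in\hom(I,R)}\varphi(I)=\sum_{x\in I^{-1}}xI=I^{-1}I,
$$
where the last equality holds because $I^{-1}$ is an $R$-submodule of $Q(R)$: the sum $\sum_{x\in I^{-1}}xI$ is exactly the set of finite sums $\sum_i x_ib_i$ with $x_i\in I^{-1}$ and $b_i\in I$, which is the product ideal $I^{-1}I$, and conversely $I^{-1}I\subset\sum_{x\in I^{-1}}xI$ is immediate. The main obstacle is really just the surjectivity argument, whose clean point is that the single relation $a\varphi(b)=b\varphi(a)$ together with the regularity of $a$ already pins $\varphi$ down on all of $I$; everything else is bookkeeping with $R$-submodules of $Q(R)$.
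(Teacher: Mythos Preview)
Your argument is correct and is the standard one: the identification $I^{-1}\cong\hom_R(I,R)$ via $x\mapsto\mu_x$, with surjectivity coming from the relation $a\varphi(b)=b\varphi(a)$ for a fixed regular $a\in I$, is exactly how this lemma is usually proved (and is essentially the argument in \cite{hhs}). Note, however, that the present paper does not give its own proof of this statement: it is recorded as a Fact with a citation to \cite[Lemma 1.1]{hhs}, so there is nothing in the paper to compare your approach against.
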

Note that if $R$ is a Noetherian normal domain and 
$I$ is a divisorial ideal, then $I^{-1}=I^{(-1)}$, the inverse element of
$I$ in $\Div(R)$.
Moreover, we recall the following.

\begin{fact}[{\cite[Lemma 2.1]{hhs}}]
\mylabel{fact:hhs2.1}
Let $R$ be a \cm\ local or graded ring over a field with canonical
module $\omega_R$.
Then for $\pppp\in\spec(R)$,
$$
R_\pppp\mbox{ is \gor}\iff
\pppp\not\supset\trace(\omega_R).
$$
In particular, the non-\gor\ locus of $\spec R$ is $V(\trace(\omega_R))$.
\end{fact}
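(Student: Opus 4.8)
The plan is to reduce the statement to a purely local question about the canonical module and then to exploit the standard duality properties of canonical modules over \cm\ local rings. (In the graded case one argues in the same way, replacing $\mmmm$ below by the graded maximal ideal and invoking graded local duality.)

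First I would localize. Because $\omega_R$ is finitely generated over the Noetherian ring $R$, the formation of the trace commutes with localization: $\trace(\omega_R)_\pppp=\trace\big((\omega_R)_\pppp\big)$ for every $\pppp\in\spec R$, since $\hom_R(\omega_R,R)_\pppp=\hom_{R_\pppp}\big((\omega_R)_\pppp,R_\pppp\big)$ and localization is exact. Moreover $(\omega_R)_\pppp=\omega_{R_\pppp}$ by the general theory of canonical modules (\cite[Theorem 3.3.5]{bh}). An ideal of the local ring $R_\pppp$ equals the whole ring exactly when it is not contained in $\pppp R_\pppp$, so $\pppp\not\supset\trace(\omega_R)$ is equivalent to $\trace(\omega_{R_\pppp})=R_\pppp$. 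Hence it suffices to prove: a \cm\ local ring $(S,\mmmm)$ admitting a canonical module $\omega_S$ is \gor\ if and only if $\trace(\omega_S)=S$.

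The direction ``\gor\ $\Rightarrow$ $\trace(\omega_S)=S$'' is immediate, since then $\omega_S\cong S$ and the identity map shows $\trace(S)=S$. For the converse, assume $\trace(\omega_S)=S$. Each $\varphi(\omega_S)$, for $\varphi\in\hom_S(\omega_S,S)$, is an ideal of $S$; were all of them contained in $\mmmm$, their sum could not be $S$, so some $\varphi(\omega_S)$ is not contained in $\mmmm$ and therefore equals $S$. Thus some $\varphi\colon\omega_S\to S$ is surjective, and since $S$ is free this surjection splits: $\omega_S\cong S\oplus K$ for some $S$-module $K$, and it remains to show $K=0$. Here I would apply $\hom_S(-,\omega_S)$ to $\omega_S\cong S\oplus K$; using the standard identities $\hom_S(S,\omega_S)=\omega_S$ and $\hom_S(\omega_S,\omega_S)=S$ for \cm\ local rings with canonical module, one gets $S\cong\omega_S\oplus\hom_S(K,\omega_S)$. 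Since $S$ is local, it is indecomposable as a module over itself, and $\omega_S\neq 0$, so $\hom_S(K,\omega_S)=0$. But $K$ is a direct summand of the maximal \cm\ module $\omega_S$, hence is itself maximal \cm, and $\hom_S(-,\omega_S)$ is a duality on maximal \cm\ modules and so annihilates no nonzero module; therefore $K=0$, i.e.\ $\omega_S\cong S$ and $S$ is \gor. The closing assertion that the non-\gor\ locus of $\spec R$ is $V(\trace(\omega_R))$ is then merely a restatement, $\trace(\omega_R)$ being an ideal of $R$.

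I expect the main obstacle to be isolating exactly the right input from the theory of canonical modules in the general (possibly non-domain) \cm\ case, namely the identity $\hom_S(\omega_S,\omega_S)=S$ together with the reflexivity of maximal \cm\ modules under the functor $\hom_S(-,\omega_S)$; both rest on local duality. For the normal domains $R$ that arise in the rest of this paper this step is much easier: $\omega_R$ is then torsion-free of rank one, so in $\omega_S\cong S\oplus K$ the summand $K$ has rank zero and is torsion-free, whence $K=0$ with no appeal to the duality functor.
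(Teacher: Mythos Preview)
The paper does not prove this statement; it records it as a Fact and cites \cite[Lemma~2.1]{hhs} for the proof. Your argument is a correct self-contained proof of the cited result: the reduction to the local case via localization of the trace and of $\omega_R$ is standard, and the local step---splitting a surjection $\omega_S\twoheadrightarrow S$ and then eliminating the complementary summand $K$ using $\hom_S(\omega_S,\omega_S)\cong S$ together with the duality $\hom_S(-,\omega_S)$ on maximal \cm\ modules---is exactly the right mechanism.
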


We also recall the following our previous results.

\begin{definition}
\rm
\mylabel{def:un}
Let $G'=(V',E')$ be a graph and
set 
$\msKKK=\msKKK(G')\define
\{K\subset V\mid
K$ is a clique of $G'$ and size of $K$ is less than or equal to 3$\}$.
For $n\in\ZZZ$,
we set
$$
\tUUUUU^{(n)}(G')
\define\left\{\mu\in\ZZZ^{{V'}^-}\left|\ 
\vcenter{\hsize=.5\textwidth\relax\noindent
$\mu(z)\geq n$ for any $z\in V'$,
$\mu^+(K)\leq \mu(-\infty)-n$ for any maximal element $K$ of $\msKKK$ and
$\mu^+(C)\leq\mu(-\infty)\frac{\#C-1}{2}-n$ for any odd cycle $C$ without
chord and length at least $5$}\right.\right\}
$$
\end{definition}
By this notation, the following holds.

\begin{fact}[{\cite[Remark 3.10]{miy2}}]
\mylabel{fact:symb power}
If $G'$ is a t-perfect graph, then
$$
\omega_{E_\KKK[\stab(G')]}^{(n)}=\bigoplus_{\mu\in\tUUUUU^{(n)}(G')}\KKK T^\mu
$$
for any $n\in\ZZZ$,
where $\omega^{(n)}_{E_\KKK[\stab(G')]}$ is the $n$-th power of 
$\omega_{E_\KKK[\stab(G')]}$ in $\Div(E_\KKK[\stab(G')])$.
\end{fact}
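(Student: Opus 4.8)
The plan is to realize $R\define E_\KKK[\stab(G')]$ as a normal affine semigroup ring, to recall the combinatorial description of its monomial divisorial ideals, to single out the canonical ideal among them, and then to read every power of the canonical class off the facet inequalities of $\stab(G')$.

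Concretely, $R$ is the semigroup ring of $\{f\in\ZZZ^{{V'}^-}\mid f(-\infty)\geq 0,\ \frac{1}{f(-\infty)}f|_{V'}\in\stab(G')\}=\CCCCC\cap\ZZZ^{{V'}^-}$, where $\CCCCC\subset\RRR^{{V'}^-}$ is the full-dimensional pointed rational cone generated by $\stab(G')\times\{1\}$, the last coordinate being the one indexed by $-\infty$ (boundedness of $\stab(G')$ forces $f=0$ when $f(-\infty)=0$). Since $R$ is a normal Noetherian domain by Hochster \cite{hoc}, I would invoke the standard dictionary for such rings, which is exactly the setting of Stanley's description \cite[p.\ 82]{sta2} underlying Lemma \ref{lem:sta desc} (see also \cite{bh}): writing $F_1,\dots,F_s$ for the facets of $\CCCCC$ and $\ell_j$ for the primitive integral linear form on $\RRR^{{V'}^-}$ that is nonnegative on $\CCCCC$ and vanishes on $F_j$, the monomial divisorial fractional ideals of $R$ are precisely the modules $D_{(a_1,\dots,a_s)}\define\bigoplus_{\mu\in\ZZZ^{{V'}^-},\ \ell_j(\mu)\geq a_j\ \forall j}\KKK T^\mu$ for $(a_1,\dots,a_s)\in\ZZZ^s$; the assignment $D_{(a_1,\dots,a_s)}\mapsto(a_1,\dots,a_s)$ is an isomorphism of this subgroup of $\Div(R)$ onto $(\ZZZ^s,+)$, and $\omega_R=D_{(1,\dots,1)}$ because, by Lemma \ref{lem:sta desc}, $\omega_R$ is spanned by the $T^\mu$ with $\mu(-\infty)>0$ and $\frac{1}{\mu(-\infty)}\mu|_{V'}\in\relint\stab(G')$, that is (since $\stab(G')$ is full-dimensional and the $\ell_j$ are integral) with $\ell_j(\mu)\geq 1$ for every $j$. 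Therefore $\omega_R^{(n)}=D_{(n,\dots,n)}=\bigoplus_{\mu\in\ZZZ^{{V'}^-},\ \ell_j(\mu)\geq n\ \forall j}\KKK T^\mu$ for all $n\in\ZZZ$. (For $n\geq 1$ one can bypass the group law and argue directly: $\omega_R^n\subseteq D_{(n,\dots,n)}$, the latter is divisorial, and the two coincide after localizing at every height-one prime of $R$; only the primes attached to the $F_j$ are relevant, at which $R$ becomes a discrete valuation ring with valuation $\mu\mapsto\ell_j(\mu)$, so $D_{(n,\dots,n)}=(\omega_R^n)^{\ast\ast}$.)

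It remains to identify the facets of $\CCCCC$, equivalently the facet-defining inequalities of $\stab(G')$. As $G'$ is t-perfect, $\stab(G')$ is the solution set of $0\leq f(x)\leq 1$ ($x\in V'$), $f^+(e)\leq 1$ ($e\in E'$) and $f^+(C)\leq\frac{\#C-1}{2}$ ($C$ an odd cycle of $G'$), so every facet arises from one of these; the analysis I would carry out is that every $f(x)\geq 0$ is facet-defining (the face $f(x)=0$ equals $\stab(G'-x)$, of dimension $\#V'-1$); that the facet-defining clique inequalities $f^+(K)\leq 1$ are exactly those with $K$ a maximal clique of $G'$, which has size $\leq 3$ as $G'$ is $K_4$-free and hence is a maximal element of $\msKKK(G')$, whereas $f(x)\leq 1$ is the case $K=\{x\}$ and is a facet only when $x$ is isolated, an edge inequality for an edge contained in a triangle is dominated by that triangle's clique inequality together with a nonnegativity inequality, and a non-maximal clique inequality is likewise redundant; and that the facet-defining odd-cycle inequalities are exactly those of chordless odd cycles of length $\geq 5$ (length $3$ giving a clique inequality, and the inequality of a chorded odd cycle being implied by those of shorter odd cycles and cliques). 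Finally $f(-\infty)\geq 0$ is \emph{not} facet-defining, since $\stab(G')$ is bounded and so $f(-\infty)=0$ cuts out only the apex of $\CCCCC$. With the facets so identified, the forms $\ell_j$ are $\mu\mapsto\mu(x)$ for $x\in V'$, $\mu\mapsto\mu(-\infty)-\mu^+(K)$ for $K$ a maximal element of $\msKKK(G')$, and $\mu\mapsto\mu(-\infty)\frac{\#C-1}{2}-\mu^+(C)$ for $C$ a chordless odd cycle of length $\geq 5$; imposing $\ell_j(\mu)\geq n$ for all $j$ reproduces, term by term, the three families of inequalities defining $\tUUUUU^{(n)}(G')$ in Definition \ref{def:un}, so $\omega_R^{(n)}=\bigoplus_{\mu\in\tUUUUU^{(n)}(G')}\KKK T^\mu$, as asserted. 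I expect the facet-and-redundancy analysis of $\stab(G')$ for t-perfect $G'$, precisely the polyhedral bookkeeping of \cite{miy2}, to be the only substantive step, everything else being the standard machinery of normal affine semigroup rings.
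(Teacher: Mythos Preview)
The paper does not prove this statement at all: it is stated as a \emph{Fact} and simply cited from \cite[Remark 3.10]{miy2}, so there is no in-paper proof to compare against. Your sketch is the standard route---realize $R$ as a normal affine semigroup ring, parametrize monomial divisorial ideals by integer vectors indexed by the facets of the cone, identify $\omega_R$ with the all-ones vector via Lemma \ref{lem:sta desc}, and then read off $\omega_R^{(n)}$---and this is exactly the mechanism behind the cited remark.

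The one point that deserves a flag: your argument tacitly assumes that the constraints appearing in Definition \ref{def:un} are in bijection with the facets of the cone over $\stab(G')$. For the nonnegativity and maximal-clique inequalities this is classical, but the assertion that \emph{every} chordless odd cycle of length $\geq 5$ gives a facet of $\stab(G')$ is not automatic for an arbitrary t-perfect graph; if some such inequality were redundant, one would still need to check that imposing ``$\geq n$'' on it is implied by the genuine facet conditions (which is not a formal consequence of redundancy at level $0$). You correctly identify this facet-and-redundancy analysis as the substantive step and defer it to \cite{miy2}, which is appropriate here since that is precisely where the work is done.
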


We abbreviate $\tUUUUU^{(n)}(G)$ as $\tUUUUU^{(n)}$ in the rest of this paper.
The following lemma is very easily proved but very useful.

\begin{lemma}
\mylabel{lem:triv}
Suppose that $\eta\in\tUUUUU^{(1)}$ 
and $\zeta\in\tUUUUU^{(-1)}$.
If $x\in V$ and $(\eta+\zeta)(x)=0$, then $\eta(x)=1$ and $\zeta(x)=-1$.
\end{lemma}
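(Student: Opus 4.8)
The plan is to use only the first of the three defining conditions in Definition~\ref{def:un}, namely the coordinatewise lower bound $\mu(z)\ge n$ for every $z\in V$; the clique inequalities and the odd-cycle inequalities play no role here, and neither does the cyclic structure of $G$.

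First I would unwind the hypotheses. Since $\eta\in\tUUUUU^{(1)}=\tUUUUU^{(1)}(G)$, Definition~\ref{def:un} gives $\eta(x)\ge 1$ for the given vertex $x\in V$. Likewise, since $\zeta\in\tUUUUU^{(-1)}=\tUUUUU^{(-1)}(G)$, the same condition gives $\zeta(x)\ge -1$.

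Next I would exploit the assumption $(\eta+\zeta)(x)=0$. By the definition of the sum of maps in $\RRR^{V^-}$ this means $\eta(x)+\zeta(x)=0$, i.e.\ $\eta(x)=-\zeta(x)$. Combining with $\zeta(x)\ge -1$ gives $\eta(x)=-\zeta(x)\le 1$; together with $\eta(x)\ge 1$ this forces $\eta(x)=1$, and then $\zeta(x)=-\eta(x)=-1$, which is the assertion.

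There is essentially no obstacle. The only things to keep straight are the sign convention in Definition~\ref{def:un} (the lower bound is exactly $n$, hence $1$ for $\tUUUUU^{(1)}$ and $-1$ for $\tUUUUU^{(-1)}$) and the fact that $\eta$ and $\zeta$ are $\ZZZ$-valued, so the two inequalities squeeze $\eta(x)$ between the integers $1$ and $1$. The whole argument is a two-line pinch.
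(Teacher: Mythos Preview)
Your argument is correct and is precisely the intended one: the paper does not even spell out a proof, merely calling the lemma ``very easily proved,'' and the pinch $\eta(x)\ge 1$, $\zeta(x)\ge -1$, $\eta(x)+\zeta(x)=0$ from the first condition in Definition~\ref{def:un} is exactly what is meant.
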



\section{Non-\gor\ loci of the Ehrhart rings of the stable set 
polytopes of cycle graphs}

\mylabel{sec:nongor}

In this section, we state the non-\gor\ loci of the Ehrhart rings of the
stable set polytopes of cycle graphs.
Since it is known that the Ehrhart rings of the stable set polytope of 
even graphs and odd cycle graphs with length at most 5 are \gor,
we focus our attention to odd cycles with length at least 7.

Let $G=(V,E)$ be a cycle graph with length $2\ell+1$, where $\ell$ is an 
integer with $\ell\geq 3$.
We set $R=\eksg$, $V=\{v_0, v_1, \ldots, v_{2\ell}\}$
and $E=\{\{v_i,v_j\}\mid i-j\equiv 1\pmod{2\ell+1}\}$.
Further, we set $e_j=\{v_j, v_{j+1}\}$ for $0\leq j\leq 2\ell-1$
and $e_{2\ell}=\{v_{2\ell}, v_0\}$.
For $i$ with $0\leq i\leq 2\ell$, we define $\mu_i\in\ZZZ^{V^-}$ by
$$
\mu_i(v_j)=
\left\{
\begin{array}{ll}
1&\qquad\mbox{$j-i\equiv 0, 2, 4, \ldots, 2\ell-2 \pmod{2\ell+1}$},\\
0&\qquad\mbox{otherwise}
\end{array}
\right.
$$
and
$$
\mu_i(-\infty)=1
$$
and set $\nu_i=\mu_i|_V$.
For example, the case where $\ell=4$, $\nu_i$ are as follows, where the
top vertex is $v_0$ and $v_1$, $v_2$, \ldots, $v_8$ are aligned anti-clockwise.
\begin{center}
\begin{tikzpicture}
\node at (180:3){$\nu_0=$};
\coordinate (X0) at (90:1.6);
\coordinate (X1) at (130:1.6);
\coordinate (X2) at (170:1.6);
\coordinate (X3) at (210:1.6);
\coordinate (X4) at (250:1.6);
\coordinate (X5) at (290:1.6);
\coordinate (X6) at (330:1.6);
\coordinate (X7) at (370:1.6);
\coordinate (X8) at (410:1.6);
\draw (X0)--(X1)--(X2)--(X3)--(X4)--(X5)--(X6)--(X7)--(X8)--cycle;
\draw[fill] (X0) circle [radius=0.1];
\draw[fill] (X1) circle [radius=0.1];
\draw[fill] (X2) circle [radius=0.1];
\draw[fill] (X3) circle [radius=0.1];
\draw[fill] (X4) circle [radius=0.1];
\draw[fill] (X5) circle [radius=0.1];
\draw[fill] (X6) circle [radius=0.1];
\draw[fill] (X7) circle [radius=0.1];
\draw[fill] (X8) circle [radius=0.1];

\node at (90:2) {$1$};
\node at (130:2) {$0$};
\node at (170:2) {$1$};
\node at (210:2) {$0$};
\node at (250:2) {$1$};
\node at (290:2) {$0$};
\node at (330:2) {$1$};
\node at (370:2) {$0$};
\node at (410:2) {$0$};
\end{tikzpicture}
\qquad
\begin{tikzpicture}
\node at (180:3){$\nu_1=$};
\coordinate (X0) at (90:1.6);
\coordinate (X1) at (130:1.6);
\coordinate (X2) at (170:1.6);
\coordinate (X3) at (210:1.6);
\coordinate (X4) at (250:1.6);
\coordinate (X5) at (290:1.6);
\coordinate (X6) at (330:1.6);
\coordinate (X7) at (370:1.6);
\coordinate (X8) at (410:1.6);
\draw (X0)--(X1)--(X2)--(X3)--(X4)--(X5)--(X6)--(X7)--(X8)--cycle;
\draw[fill] (X0) circle [radius=0.1];
\draw[fill] (X1) circle [radius=0.1];
\draw[fill] (X2) circle [radius=0.1];
\draw[fill] (X3) circle [radius=0.1];
\draw[fill] (X4) circle [radius=0.1];
\draw[fill] (X5) circle [radius=0.1];
\draw[fill] (X6) circle [radius=0.1];
\draw[fill] (X7) circle [radius=0.1];
\draw[fill] (X8) circle [radius=0.1];

\node at (90:2) {$0$};
\node at (130:2) {$1$};
\node at (170:2) {$0$};
\node at (210:2) {$1$};
\node at (250:2) {$0$};
\node at (290:2) {$1$};
\node at (330:2) {$0$};
\node at (370:2) {$1$};
\node at (410:2) {$0$};
\end{tikzpicture}
\end{center}
\begin{center}
\begin{tikzpicture}
\node at (180:3){$\nu_2=$};
\coordinate (X0) at (90:1.6);
\coordinate (X1) at (130:1.6);
\coordinate (X2) at (170:1.6);
\coordinate (X3) at (210:1.6);
\coordinate (X4) at (250:1.6);
\coordinate (X5) at (290:1.6);
\coordinate (X6) at (330:1.6);
\coordinate (X7) at (370:1.6);
\coordinate (X8) at (410:1.6);
\draw (X0)--(X1)--(X2)--(X3)--(X4)--(X5)--(X6)--(X7)--(X8)--cycle;
\draw[fill] (X0) circle [radius=0.1];
\draw[fill] (X1) circle [radius=0.1];
\draw[fill] (X2) circle [radius=0.1];
\draw[fill] (X3) circle [radius=0.1];
\draw[fill] (X4) circle [radius=0.1];
\draw[fill] (X5) circle [radius=0.1];
\draw[fill] (X6) circle [radius=0.1];
\draw[fill] (X7) circle [radius=0.1];
\draw[fill] (X8) circle [radius=0.1];

\node at (90:2) {$0$};
\node at (130:2) {$0$};
\node at (170:2) {$1$};
\node at (210:2) {$0$};
\node at (250:2) {$1$};
\node at (290:2) {$0$};
\node at (330:2) {$1$};
\node at (370:2) {$0$};
\node at (410:2) {$1$};
\end{tikzpicture}
\qquad
\begin{tikzpicture}
\node at (180:3){$\nu_3=$};
\coordinate (X0) at (90:1.6);
\coordinate (X1) at (130:1.6);
\coordinate (X2) at (170:1.6);
\coordinate (X3) at (210:1.6);
\coordinate (X4) at (250:1.6);
\coordinate (X5) at (290:1.6);
\coordinate (X6) at (330:1.6);
\coordinate (X7) at (370:1.6);
\coordinate (X8) at (410:1.6);
\draw (X0)--(X1)--(X2)--(X3)--(X4)--(X5)--(X6)--(X7)--(X8)--cycle;
\draw[fill] (X0) circle [radius=0.1];
\draw[fill] (X1) circle [radius=0.1];
\draw[fill] (X2) circle [radius=0.1];
\draw[fill] (X3) circle [radius=0.1];
\draw[fill] (X4) circle [radius=0.1];
\draw[fill] (X5) circle [radius=0.1];
\draw[fill] (X6) circle [radius=0.1];
\draw[fill] (X7) circle [radius=0.1];
\draw[fill] (X8) circle [radius=0.1];

\node at (90:2) {$1$};
\node at (130:2) {$0$};
\node at (170:2) {$0$};
\node at (210:2) {$1$};
\node at (250:2) {$0$};
\node at (290:2) {$1$};
\node at (330:2) {$0$};
\node at (370:2) {$1$};
\node at (410:2) {$0$};
\end{tikzpicture}
\end{center}
$$
\cdots
$$
\begin{center}
\begin{tikzpicture}
\node at (180:3){$\nu_8=$};
\coordinate (X0) at (90:1.6);
\coordinate (X1) at (130:1.6);
\coordinate (X2) at (170:1.6);
\coordinate (X3) at (210:1.6);
\coordinate (X4) at (250:1.6);
\coordinate (X5) at (290:1.6);
\coordinate (X6) at (330:1.6);
\coordinate (X7) at (370:1.6);
\coordinate (X8) at (410:1.6);
\draw (X0)--(X1)--(X2)--(X3)--(X4)--(X5)--(X6)--(X7)--(X8)--cycle;
\draw[fill] (X0) circle [radius=0.1];
\draw[fill] (X1) circle [radius=0.1];
\draw[fill] (X2) circle [radius=0.1];
\draw[fill] (X3) circle [radius=0.1];
\draw[fill] (X4) circle [radius=0.1];
\draw[fill] (X5) circle [radius=0.1];
\draw[fill] (X6) circle [radius=0.1];
\draw[fill] (X7) circle [radius=0.1];
\draw[fill] (X8) circle [radius=0.1];

\node at (90:2) {$0$};
\node at (130:2) {$1$};
\node at (170:2) {$0$};
\node at (210:2) {$1$};
\node at (250:2) {$0$};
\node at (290:2) {$1$};
\node at (330:2) {$0$};
\node at (370:2) {$0$};
\node at (410:2) {$1$};
\end{tikzpicture}
\end{center}

Let $\pppp_i$ be the ideal of $R$ generated by $\{T^\mu\mid
\mu\in\tUUUUU^{(0)}$, $\mu(v_i)>0$ or $\mu^+(V)<\ell\mu(-\infty)\}$,
i.e., 
$$\pppp_i=\bigoplus_{\mu\in\tUUUUU^{(0)}\atop\mu(v_i)>0\ \mathrm{or}\ \mu^+(V)<\ell\mu(-\infty)}
\KKK T^\mu$$
for $0\leq i\leq 2\ell$.
Then $\pppp_i$ is the prime ideal corresponding to the face
$\msPPP_i=\{f\in\stab(G)\mid f(v_i)=0, f^+(V)=\ell\}$ of $\stab(G)$,
i.e., $E_\KKK[\msPPP_i]=R/\pppp_i$.

We first analyze the dimension of $\msPPP_i$.
By symmetry, we see that $\dim \msPPP_i=\dim\msPPP_0$ for any $i$.

If $f\in\msPPP_0$, then $f(v_0)=0$ and $f^+(V)=\ell$.
Since
$f^+(V)=f(v_0)+\sum_{j=1}^\ell f^+(e_{2j-1})$ and
$f^+(e_j)\leq 1$ for any $j$,
we see that
$$
f^+(e_{2j-1})=1\qquad\mbox{for $1\leq j\leq \ell$}.
$$
These linear equations and $f(v_0)=0$  are independent.
Therefore we see that $\dim\msPPP_0\leq 2\ell+1-(\ell+1)=\ell$.

Next we prove  the reverse inequality.
It is easily verified that $\nu_2$, $\nu_4$, \ldots, $\nu_{2\ell}$ and $\nu_1$ are
elements of $\msPPP_0$.
The matrix whose columns correspond to $v_0$, $v_1$, \ldots, $v_{2\ell}$ and rows correspond
to $\nu_4-\nu_2$, $\nu_6-\nu_4$, \ldots, $\nu_{2\ell}-\nu_{2\ell-2}$, $\nu_1-\nu_{2\ell}$ 
respectively is
$$
\left[
\begin{array}{ccccccccccc}
0&1&-1&0&0&\cdots&\cdots&0&0&0&0\\
0&0&0&1&-1&\cdots&\cdots&0&0&0&0\\
&&&&\ddots&\ddots\\
&&&&&\ddots&\ddots\\
&&&&&&\ddots&\ddots\\
0&0&0&0&0&\cdots&\cdots&1&-1&0&0\\
0&0&0&0&0&\cdots&\cdots&0&0&1&-1
\end{array}
\right].
$$
This is a rank $\ell$ matrix.
Therefore, $\dim\msPPP_0=\ell$.
In particular, $\dim R/\pppp_0=\ell+1$.

Next we state the following.

\begin{lemma}
\mylabel{lem:not in pi}
Let $i$ be an integer with $0\leq i\leq 2\ell$.
Then $\pppp_i\supset\trace(\omega_R)$.
\end{lemma}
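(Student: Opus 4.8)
The plan is to make $\trace(\omega_R)$ completely explicit as a monomial ideal and then check the containment on generators. First I would invoke Fact \ref{fact:hhs1.1} together with the remark following it: since $R$ is a Noetherian normal domain and the canonical ideal $\omega_R$ is divisorial, $\trace(\omega_R)=\omega_R^{-1}\omega_R=\omega_R^{(-1)}\omega_R$. By Fact \ref{fact:symb power} (applicable because $G$ is t-perfect), $\omega_R=\bigoplus_{\eta\in\tUUUUU^{(1)}}\KKK T^\eta$ and $\omega_R^{(-1)}=\bigoplus_{\zeta\in\tUUUUU^{(-1)}}\KKK T^\zeta$, so, both being $\KKK$-spanned by Laurent monomials, their product ideal is $\trace(\omega_R)=\sum_{\eta\in\tUUUUU^{(1)},\,\zeta\in\tUUUUU^{(-1)}}\KKK T^{\eta+\zeta}$. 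As $\pppp_i$ is a $\KKK$-subspace of $R$, it then suffices to show $T^{\eta+\zeta}\in\pppp_i$ for every $\eta\in\tUUUUU^{(1)}$ and $\zeta\in\tUUUUU^{(-1)}$. Writing $\mu=\eta+\zeta$, a one-line check gives $\mu\in\tUUUUU^{(0)}$ (for each $j$, $\mu(v_j)\ge 1+(-1)=0$; for each edge, $\mu^+(e_j)\le(\eta(-\infty)-1)+(\zeta(-\infty)+1)=\mu(-\infty)$; and $\mu^+(V)\le(\ell\eta(-\infty)-1)+(\ell\zeta(-\infty)+1)=\ell\mu(-\infty)$), so by the description of $\pppp_i$ the task reduces to showing that $\mu(v_i)>0$ or $\mu^+(V)<\ell\mu(-\infty)$.

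To prove this I would argue by contradiction, assuming $\mu(v_i)=0$ and $\mu^+(V)=\ell\mu(-\infty)$. Applying Lemma \ref{lem:triv} at the vertex $v_i$ gives $\eta(v_i)=1$. Since $\eta^+(V)\le\ell\eta(-\infty)-1$ and $\zeta^+(V)\le\ell\zeta(-\infty)+1$ (the odd-cycle conditions in the definitions of $\tUUUUU^{(1)}$ and $\tUUUUU^{(-1)}$ for the only relevant odd cycle $C=V$), the assumed equality $\mu^+(V)=\ell\eta(-\infty)+\ell\zeta(-\infty)$ forces equality in both, so in particular $\eta^+(V)=\ell\eta(-\infty)-1$. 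Now take the matching $M=\{e_{i+1},e_{i+3},\ldots,e_{i+2\ell-1}\}$ of $G$ (indices read modulo $2\ell+1$), which consists of $\ell$ edges and covers every vertex of $G$ except $v_i$. On one hand, $\sum_{e\in M}\eta^+(e)=\eta^+(V)-\eta(v_i)=(\ell\eta(-\infty)-1)-1=\ell\eta(-\infty)-2$; on the other hand, each edge $e_j$ is a maximal element of $\msKKK$, so $\eta^+(e_j)\le\eta(-\infty)-1$, whence $\sum_{e\in M}\eta^+(e)\le\ell(\eta(-\infty)-1)=\ell\eta(-\infty)-\ell$. Comparing the two gives $\ell\eta(-\infty)-2\le\ell\eta(-\infty)-\ell$, i.e.\ $\ell\le 2$, contradicting $\ell\ge 3$. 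Hence $T^{\eta+\zeta}\in\pppp_i$ for all such $\eta,\zeta$, and therefore $\pppp_i\supset\trace(\omega_R)$.

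The only step that needs genuine care is the first one: identifying $\omega_R^{-1}$ with the symbolic power $\omega_R^{(-1)}$, hence with $\bigoplus_{\zeta\in\tUUUUU^{(-1)}}\KKK T^\zeta$, which is what reduces the statement to the monomial generators $T^{\eta+\zeta}$. After that, everything is a short combinatorial computation on the odd cycle; its crux is the elementary observation that a maximum matching of an odd cycle omits exactly one vertex, so that tightness of the odd-cycle bound on $\eta^+(V)$ forces $\eta(v_i)\ge\ell-1\ge 2$, in contradiction with the value $\eta(v_i)=1$ extracted from Lemma \ref{lem:triv}. I do not expect any real obstacle beyond this bookkeeping.
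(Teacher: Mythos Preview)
Your proof is correct and follows essentially the same route as the paper's: both reduce to monomials $T^{\eta+\zeta}$ via $\trace(\omega_R)=\omega_R\,\omega_R^{(-1)}$ and Fact~\ref{fact:symb power}, use Lemma~\ref{lem:triv} to pin down $\eta(v_i)=1$, force $\eta^+(V)=\ell\eta(-\infty)-1$ from the assumed equality $\mu^+(V)=\ell\mu(-\infty)$, and then compare this with the matching bound $\eta^+(V)\le \eta(v_i)+\sum_{e\in M}\eta^+(e)\le 1+\ell(\eta(-\infty)-1)$ to obtain $\ell\le 2$. The only cosmetic differences are that the paper normalizes to $i=0$ by symmetry and leaves the identification of $\trace(\omega_R)$ with $\sum\KKK T^{\eta+\zeta}$ implicit, whereas you spell it out.
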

\begin{proof}
We may assume that $i=0$.
Let $T^\mu$ be an arbitrary monomial in $\trace(\omega_R)$.
We deduce a contradiction by assuming $T^\mu\not\in\pppp_0$.

Since $T^\mu\in\trace(\omega_R)$, there are $\eta\in\tUUUUU^{(1)}$ and $\zeta\in\tUUUUU^{(-1)}$
with $\mu=\eta+\zeta$.
Since $T^\mu\not\in\pppp_0$, it holds that $\mu(v_0)=0$ and therefore
$\eta(v_0)=1$ by Lemma \ref{lem:triv}.
Thus,
$$
\eta^+(V)=\sum_{j=1}^\ell \eta^+(e_{2j-1})+1.
$$
Since $\eta^+(e_{2j-1})+1\leq\eta(-\infty)$ for $1\leq j\leq \ell$, we see that
$$
\eta^+(V)\leq\sum_{j=1}^\ell(\eta(-\infty)-1)+1=\ell\eta(-\infty)-\ell+1.
$$
On the other hand, since $T^\mu\not\in\pppp_0$,
$$
\mu^+(V)=\ell\mu(-\infty).
$$
Moreover, since
\begin{eqnarray*}
&&\eta^+(V)+1\leq\ell\eta(-\infty)\\
&&\zeta^+(V)-1\leq\ell\zeta(-\infty)\\
&&\eta(-\infty)+\zeta(-\infty)=\mu(-\infty)
\end{eqnarray*}
and
$$
\eta^+(V)+\zeta^+(V)=\mu^+(V)=\ell\mu(-\infty),
$$
we see that
$$
\eta^+(V)+1=\ell\eta(-\infty).
$$
Therefore,
$$
\ell\eta(-\infty)-1=\eta^+(V)\leq\ell\eta(-\infty)-\ell+1
$$
and we see that $\ell\leq 2$.
This contradicts to the assumption.
\end{proof}

By this lemma, we see that 
$$
\trace(\omega_R)\subset\bigcap_{i=0}^{2\ell}\pppp_i.
$$
Since the right hand side is a radical ideal, we see that
$$
\sqrt{\trace(\omega_R)}\subset\bigcap_{i=0}^{2\ell}\pppp_i.
$$

In order to show the reverse inclusion, we first state the following.

\begin{lemma}
\mylabel{lem:all positive}
If $\mu\in\tUUUUU^{(0)}$ and $\mu(v_i)>0$ for any $i$, then
$T^\mu\in\sqrt{\trace(\omega_R)}$.
\end{lemma}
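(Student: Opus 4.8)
The plan is to produce an integer $k\geq 1$ together with vectors $\eta\in\tUUUUU^{(1)}$ and $\zeta\in\tUUUUU^{(-1)}$ with $\eta+\zeta=k\mu$; then by Fact~\ref{fact:symb power} $T^\eta$ lies in $\omega_R=\omega_R^{(1)}$ and $T^\zeta$ lies in $\omega_R^{(-1)}=\omega_R^{-1}$ (the last equality because $\omega_R$ is divisorial), so by Fact~\ref{fact:hhs1.1} $(T^\mu)^k=T^\eta T^\zeta\in\omega_R\omega_R^{-1}=\trace(\omega_R)$, whence $T^\mu\in\sqrt{\trace(\omega_R)}$. Write $d=\mu(-\infty)$ and call an edge $e_j$ \emph{tight} if $\mu^+(e_j)=d$. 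I would reduce the whole problem to finding one vector $w\in\ZZZ^V$ with $w(v_j)+w(v_{j+1})=1$ for every tight edge $e_j$ and, in case $\mu^+(V)=\ell d$, also $\sum_{i=0}^{2\ell}w(v_i)=1$. Granting such a $w$ (extended by $w(-\infty)=0$), put $\eta=m\mu-w$ and $\zeta=m'\mu+w$, so that $\eta+\zeta=(m+m')\mu$. For $m,m'$ large enough one checks $\eta\in\tUUUUU^{(1)}$ and $\zeta\in\tUUUUU^{(-1)}$: on a tight edge the conditions on $w$ make $\eta^+(e_j)=md-1$ and $\zeta^+(e_j)=m'd+1$, the extremal values allowed by $\tUUUUU^{(1)}$ and $\tUUUUU^{(-1)}$; when $\mu^+(V)=\ell d$ one likewise gets $\eta^+(V)=\ell md-1$, $\zeta^+(V)=\ell m'd+1$; and for every remaining defining inequality the slack, thanks to $\mu\in\tUUUUU^{(0)}$ and $\mu(v_i)\geq 1$, grows linearly in $m$ (resp.\ $m'$) and so dominates the fixed vector $w$. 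This gives the reduction, with $k=m+m'$.

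To construct $w$: since $\sum_{j=0}^{2\ell}\mu^+(e_j)=2\mu^+(V)\leq 2\ell d<(2\ell+1)d$, not all edges are tight, so the tight edges span a proper subgraph $H$ of the cycle $G$, i.e.\ a disjoint union of paths. On a path-component $v_a-v_{a+1}-\dots-v_b$ of $H$ the relations $w(v_j)+w(v_{j+1})=1$ force $w$ to alternate, $w(v_a)=s$, $w(v_{a+1})=1-s$, $w(v_{a+2})=s,\dots$, leaving one free integer $s$, while on the vertices lying on no tight edge $w$ is unconstrained. If $\mu^+(V)<\ell d$ we are done (take $s=0$ and $w=0$ off $H$). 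If $\mu^+(V)=\ell d$ we must also arrange $\sum_i w(v_i)=1$; but this sum is a contribution fixed by the path-components of $H$ with an even number of vertices, plus an adjustable contribution from the path-components with an odd number of vertices and from the vertices off $H$. Since $\#V=2\ell+1$ is odd, $H$ cannot cover $V$ using only even components, so at least one adjustable summand exists, and its parameter can be chosen so that the total equals $1$.

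The step I expect to demand the most care is the verification that $\eta\in\tUUUUU^{(1)}$ and $\zeta\in\tUUUUU^{(-1)}$ — one must run through the defining inequalities one at a time, checking that those associated with tight edges (and, when $\mu^+(V)=\ell d$, the odd-cycle inequality) are satisfied with equality while all the others have slack growing with $m$ or $m'$. On the combinatorial side the one delicate point is the parity count that produces a free parameter with which to force $\sum_i w(v_i)=1$; the rest is routine bookkeeping.
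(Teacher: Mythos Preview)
Your reduction to finding a vector $w$ with $w^+(e_j)=1$ on tight edges (and $w^+(V)=1$ when $\mu^+(V)=\ell d$) is sound, and the parity argument that produces a free parameter is correct: since $\#V=2\ell+1$ is odd and the tight-edge subgraph $H$ is a proper disjoint union of paths, the vertices lying either in an odd-length path-component of $H$ or outside $H$ form a nonempty set, so at least one adjustable integer parameter is available to force $\sum_i w(v_i)=1$. Once $w$ is fixed, the verification that $\eta=m\mu-w\in\tUUUUU^{(1)}$ and $\zeta=m'\mu+w\in\tUUUUU^{(-1)}$ for $m,m'\gg0$ goes through exactly as you describe, the tight constraints being met with equality and the slack constraints dominated by the linear growth (here the hypothesis $\mu(v_i)\geq1$ is used for the vertex inequalities).

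The paper, however, takes a much more explicit and shorter route. It writes down directly $\eta(x)=\ell-1$ for $x\in V$, $\eta(-\infty)=2\ell-1$ --- this is the constant vector $\eta_{\ell-1}$ appearing later in Section~\ref{sec:ag} --- and sets $\zeta=(\ell-2)\mu-\eta$, giving the fixed exponent $k=\ell-2$. All defining inequalities are then checked by a one-line arithmetic computation; no analysis of tight edges or parity is needed, and the hypothesis $\mu(v_i)\geq1$ enters only through the single estimate $\zeta(v_i)=(\ell-2)\mu(v_i)-\ell+1\geq(\ell-2)-\ell+1=-1$. Your method is more robust --- it adapts naturally to other facet structures and does not rely on the particular numerology of the odd-cycle inequality --- but in this specific setting the paper's choice of a constant $\eta$ independent of $\mu$ sidesteps all the combinatorics and yields an explicit, small exponent.
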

\begin{proof}
Define $\eta$, $\zeta\in\ZZZ^{V^-}$ by
$$
\eta(x)=
\left\{
\begin{array}{ll}
\ell-1,&\qquad\mbox{$x\in V$},\\
2\ell-1,&\qquad\mbox{$x=-\infty$}
\end{array}
\right.
$$
and
$$
\zeta(x)=
\left\{
\begin{array}{ll}
(\ell-2)\mu(x)-\ell+1,&\qquad\mbox{$x\in V$},\\
(\ell-2)\mu(x)-2\ell+1,&\qquad\mbox{$x=-\infty$}.
\end{array}
\right.
$$
Then $\eta+\zeta=(\ell-2)\mu$.

For $x\in V$, 
$$\eta(x)=\ell-1\geq 1$$ and
$$\zeta(x)=(\ell-2)\mu(x)-\ell+1\geq \ell-2-\ell+1=-1,$$
since $\ell\geq 3$ and $\mu(x)\geq 1$.
Further, for $e\in E$, 
$$\eta^+(e)+1=2(\ell-1)+1=2\ell-1=\eta(-\infty)$$ and
$$\zeta^+(e)-1=(\ell-2)\mu^+(e)-2(\ell-1)-1
\leq(\ell-2)\mu(-\infty)-2\ell+1=\zeta(-\infty).$$
Finally,
\begin{eqnarray*}
\eta^+(V)+1&=&(2\ell+1)(\ell-1)+1\\
&=&2\ell^2-\ell\\
&=&\ell(2\ell-1)\\
&=&\ell\eta(-\infty)
\end{eqnarray*}
and
\begin{eqnarray*}
\zeta^+(V)-1&=&(\ell-2)\mu^+(V)-(2\ell+1)(\ell-1)-1\\
&\leq& \ell(\ell-2)\mu(-\infty)-2\ell^2+\ell\\
&=&\ell((\ell-2)\mu(-\infty)-2\ell+1)=\ell\zeta(-\infty).
\end{eqnarray*}
Therefore, $\eta\in\tUUUUU^{(1)}$ and $\zeta\in\tUUUUU^{(-1)}$.
Thus, we see that
$$
(T^\mu)^{\ell-2}=T^{(\ell-2)\mu}=T^\eta T^\zeta
\in\omega_R\omega_R^{(-1)}=\trace(\omega_R)
$$
and therefore
$$
T^\mu\in\sqrt{\trace(\omega_R)}.
$$
\end{proof}

Next we state the following.

\begin{lemma}
\mylabel{lem:less than lmu}
Let $\mu\in\tUUUUU^{(0)}$ and $\mu^+(V)<\ell\mu(-\infty)$.
Then $T^\mu\in\sqrt{\trace(\omega_R)}$.
\end{lemma}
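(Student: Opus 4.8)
The plan is to argue exactly as in the proof of Lemma~\ref{lem:all positive}: we exhibit $\eta\in\tUUUUU^{(1)}$ and $\zeta\in\tUUUUU^{(-1)}$ with $\eta+\zeta=(\ell-1)\mu$. Granting this, and using Fact~\ref{fact:symb power} together with Fact~\ref{fact:hhs1.1} and the remark that $\omega_R^{-1}=\omega_R^{(-1)}$, we obtain $(T^\mu)^{\ell-1}=T^{(\ell-1)\mu}=T^\eta T^\zeta\in\omega_R\omega_R^{(-1)}=\trace(\omega_R)$, whence $T^\mu\in\sqrt{\trace(\omega_R)}$, as desired.

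To build $\eta$ and $\zeta$, set $N\define\mu(-\infty)$ (note $N\geq1$, since $N=0$ would force $\mu=0$, contradicting $\mu^+(V)<\ell\mu(-\infty)$). Define $\eta,\zeta\in\ZZZ^{V^-}$ by $\eta(x)=\mu(x)+1$ and $\zeta(x)=(\ell-2)\mu(x)-1$ for $x\in V$, together with $\eta(-\infty)=N+3$, $\zeta(-\infty)=(\ell-2)N-3$; then $\eta+\zeta=(\ell-1)\mu$. The idea is that adding the ``all-ones on $V$'' vector upgrades $\mu(x)\geq0$ to $\eta(x)\geq1$ and raises every edge-sum by $2$, so shifting the $(-\infty)$-coordinate up by $3$ is exactly enough to keep $\eta^+(e)+1\leq\eta(-\infty)$ valid even on the edges where $\mu$ is tight (that is, $\mu^+(e)=N$); dually, putting the coefficient $\ell-2$ on $\mu$ in $\zeta$ amplifies the single unit of slack $\ell N-\mu^+(V)\geq1$ in the odd-cycle inequality enough to absorb the constant coming from the correction term.

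The verification is then routine: one checks the three defining inequalities of $\tUUUUU^{(1)}$ and of $\tUUUUU^{(-1)}$ against $\mu\in\tUUUUU^{(0)}$. For $\eta$: $\eta(x)=\mu(x)+1\geq1$; for each edge $e$, $\eta^+(e)+1=\mu^+(e)+3\leq N+3=\eta(-\infty)$ since $\mu^+(e)\leq N$; and $\eta^+(V)+1=\mu^+(V)+2\ell+2\leq\ell N+3\ell=\ell\eta(-\infty)$ because $\mu^+(V)\leq\ell N-1$ and $\ell\geq1$. For $\zeta$: $\zeta(x)=(\ell-2)\mu(x)-1\geq-1$ since $\ell\geq2$; for each edge $e$, $\zeta^+(e)-1=(\ell-2)\mu^+(e)-3\leq(\ell-2)N-3=\zeta(-\infty)$, again by $\mu^+(e)\leq N$; and $\zeta^+(V)-1=(\ell-2)\mu^+(V)-2\ell-2\leq\ell((\ell-2)N-3)=\ell\zeta(-\infty)$, which rearranges to $(\ell-2)(\ell N-\mu^+(V))\geq\ell-2$ and holds because $\ell N-\mu^+(V)\geq1$ and $\ell\geq3$. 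This last inequality is the sole place where the strict hypothesis $\mu^+(V)<\ell\mu(-\infty)$ is used; since Lemma~\ref{lem:all positive} disposed of the complementary case $\mu^+(V)=\ell\mu(-\infty)$ (with all coordinates of $\mu$ positive), the two lemmas together will give $\bigcap_{i=0}^{2\ell}\pppp_i\subset\sqrt{\trace(\omega_R)}$, hence equality.

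I do not expect a genuine obstacle: the construction is essentially forced, the key insight being merely that the missing unit in the odd-cycle bound is the only reserve available and therefore must be exploited by passing to a suitable power of $T^\mu$. The one point that deserves a separate sanity check is the extreme case $\ell=3$, where $\ell-2=1$ and $\zeta(-\infty)=N-3$ can be negative; there one should confirm that the displayed inequalities still reduce to consequences of $\mu\in\tUUUUU^{(0)}$ and $\mu^+(V)\leq\ell N-1$ (they do), bearing in mind that the definition of $\tUUUUU^{(-1)}$ imposes no sign restriction on the $(-\infty)$-coordinate.
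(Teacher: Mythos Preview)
Your proof is correct and follows essentially the same approach as the paper: the paper uses $\eta(x)=1$, $\eta(-\infty)=3$ and the very same $\zeta$ you wrote down, so that $\eta+\zeta=(\ell-2)\mu$ rather than $(\ell-1)\mu$; your $\eta$ is just the paper's $\eta$ plus $\mu\in\tUUUUU^{(0)}$. One small remark: your verification of $\eta^+(V)+1\leq\ell\eta(-\infty)$ invokes $\mu^+(V)\leq\ell N-1$, but in fact $\mu^+(V)\leq\ell N$ (from $\mu\in\tUUUUU^{(0)}$) and $\ell\geq2$ already suffice there, so your later claim that the strict hypothesis is used \emph{only} in the $\zeta$ odd-cycle check is indeed accurate.
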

\begin{proof}
Define $\eta$, $\zeta\in\ZZZ^{V^-}$ by
$$
\eta(x)=
\left\{
\begin{array}{ll}
1,&\qquad\mbox{$x\in V$},\\
3,&\qquad\mbox{$x=-\infty$}
\end{array}
\right.
$$
and
$$
\zeta(x)=
\left\{
\begin{array}{ll}
(\ell-2)\mu(x)-1,&\qquad\mbox{$x\in V$},\\
(\ell-2)\mu(-\infty)-3,&\qquad\mbox{$x=-\infty$}.
\end{array}
\right.
$$
Then $\eta+\zeta=(\ell-2)\mu$.

It is obvious that $\eta(x)\geq 1$ and $\zeta(x)\geq -1$ for any $x\in V$.
Let $e$ be an arbitrary edge in $G$.
Then
$$
\eta^+(e)+1=2+1=\eta(-\infty)
$$
and
$$
\zeta^+(e)-1=(\ell-2)\mu^+(e)-2-1\leq(\ell-2)\mu(-\infty)-3=\zeta(-\infty).
$$
Further,
$$
\eta^+(V)+1=(2\ell+1)+1\leq3\ell=\ell\eta(-\infty)
$$
and, since $\mu^+(V)+1\leq\ell\mu(-\infty)$ by assumption,
we see that
\begin{eqnarray*}
\zeta^+(V)-1&=&(\ell-2)\mu^+(V)-(2\ell+1)-1\\
&=&(\ell-2)(\mu^+(V)+1)-3\ell\\
&\leq&(\ell-2)\ell\mu(-\infty)-3\ell\\
&=&\ell\zeta(-\infty).
\end{eqnarray*}
Therefore, $\eta\in\tUUUUU^{(1)}$ and $\zeta\in\tUUUUU^{(-1)}$.
Thus, we see that
$$
(T^\mu)^{\ell-2}=T^{(\ell-2)\mu}=T^\eta T^\zeta
\in\omega_R\omega_R^{(-1)}=\trace(\omega_R)
$$
and therefore
$$
T^\mu\in\sqrt{\trace(\omega_R)}.
$$
\end{proof}

By Lemmas \ref{lem:not in pi}, \ref{lem:all positive} and \ref{lem:less than lmu},
we see the following.

\begin{thm}
\mylabel{thm:non gor}
Let $G=(V,E)$ be a cycle graph with length $2\ell+1$, where $\ell$
is an integer with $\ell\geq 3$.
Then in the above notation,
$$
\sqrt{\trace(\omega_R)}=\bigcap_{i=0}^{2\ell}\pppp_i.
$$
In particular, non-\gor\ locus of the Ehrhart ring $R$ of the stable
set polytope of $G$ is a closed subset of $\spec R$ of
dimension $\ell+1$.
\end{thm}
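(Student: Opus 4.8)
The plan is to establish the asserted equality by combining the three lemmas just proved, and then to extract the dimension statement from the explicit description of the $\pppp_i$. The inclusion $\sqrt{\trace(\omega_R)}\subset\bigcap_{i=0}^{2\ell}\pppp_i$ is already in hand: Lemma~\ref{lem:not in pi} gives $\trace(\omega_R)\subset\pppp_i$ for each $i$, hence $\trace(\omega_R)\subset\bigcap_i\pppp_i$, and since the right-hand side is an intersection of prime ideals it is radical, so taking radicals preserves the inclusion. For the reverse inclusion $\bigcap_{i=0}^{2\ell}\pppp_i\subset\sqrt{\trace(\omega_R)}$, I would argue at the level of monomials, which suffices because both ideals are monomial ideals (being generated by Laurent monomials $T^\mu$ with $\mu\in\tUUUUU^{(0)}$). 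So let $T^\mu$ with $\mu\in\tUUUUU^{(0)}$ lie in every $\pppp_i$; I must show $T^\mu\in\sqrt{\trace(\omega_R)}$.

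The key step is a case split on $\mu$. By the definition of $\pppp_i$, membership $T^\mu\in\pppp_i$ means $\mu(v_i)>0$ or $\mu^+(V)<\ell\mu(-\infty)$. If $\mu^+(V)<\ell\mu(-\infty)$, then Lemma~\ref{lem:less than lmu} immediately gives $T^\mu\in\sqrt{\trace(\omega_R)}$. Otherwise $\mu^+(V)=\ell\mu(-\infty)$ (equality, since $\mu\in\tUUUUU^{(0)}$ forces $\mu^+(V)\leq\ell\mu(-\infty)$ via the odd-cycle constraint with $n=0$), and then for \emph{every} $i$ the first alternative must hold, i.e.\ $\mu(v_i)>0$ for all $i$; now Lemma~\ref{lem:all positive} applies and again $T^\mu\in\sqrt{\trace(\omega_R)}$. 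This disposes of all monomial generators of $\bigcap_i\pppp_i$, so the two radical ideals coincide.

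For the final sentence, Fact~\ref{fact:hhs2.1} identifies the non-\gor\ locus of $\spec R$ with $V(\trace(\omega_R))=V(\sqrt{\trace(\omega_R)})=V\bigl(\bigcap_{i=0}^{2\ell}\pppp_i\bigr)=\bigcup_{i=0}^{2\ell}V(\pppp_i)$, a finite union of irreducible closed subsets. Each $\pppp_i$ is the prime corresponding to the face $\msPPP_i$, so $\dim R/\pppp_i=\dim\msPPP_i+1$, and the computation carried out above the statement of Lemma~\ref{lem:not in pi} shows $\dim\msPPP_i=\dim\msPPP_0=\ell$ by the rotational symmetry of the cycle; hence $\dim R/\pppp_i=\ell+1$ for every $i$, and the dimension of the union is $\ell+1$.

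I expect the only genuine subtlety to be the case analysis: one must be careful that $\mu\in\tUUUUU^{(0)}$ already forces $\mu^+(V)\leq\ell\mu(-\infty)$ (so that "not $<$" really does mean "$=$"), and that the failure of $\mu^+(V)<\ell\mu(-\infty)$ is exactly what pushes the burden onto the coordinate conditions $\mu(v_i)>0$ for all $i$ simultaneously; everything else is a formal assembly of the preceding lemmas and the face-correspondence for Ehrhart rings of polytopes.
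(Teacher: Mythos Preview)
Your proposal is correct and follows exactly the same approach as the paper: one inclusion from Lemma~\ref{lem:not in pi}, the reverse inclusion via the dichotomy on $\mu$ handled by Lemmas~\ref{lem:all positive} and~\ref{lem:less than lmu}, and the dimension from the face computation $\dim\msPPP_i=\ell$. In fact the paper states the theorem immediately after the three lemmas with only the sentence ``By Lemmas~\ref{lem:not in pi}, \ref{lem:all positive} and~\ref{lem:less than lmu}, we see the following,'' so you have simply made explicit the case analysis that the paper leaves to the reader.
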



\section{Almost \gor\ property}

\mylabel{sec:ag}

In this section, we show that the Ehrhart rings of the stable set polytopes
of cycle graphs are almost \gor\ graded rings.
For the definition of almost \gor\ property, see \cite{gtt}.
In this paper, we only treat almost \gor\ graded property and we say almost
\gor\ graded as almost \gor\ for short.
We focus our attention to odd cycle graphs of length at least 7 by the
same reason as the previous section.

Let $\ell$, $G$, $R$, $v_i$, $e_i$, $\mu_i$ and $\nu_i$ for $0\leq i\leq 2\ell$ be
as in the previous section.
Further, for integer $k$ with $1\leq k\leq \ell-1$, we define $\eta_k\in\ZZZ^{V^-}$
by
$$
\eta_k(x)=
\left\{
\begin{array}{ll}
k,&\qquad\mbox{$x\in V$},\\
2k+1,&\qquad\mbox{$x=-\infty$}.
\end{array}
\right.
$$
Then it is easily verified that $\eta_k\in\tUUUUU^{(1)}$ for any $k$.
Note that $\ell\eta_k(-\infty)-\eta_k^+(V)=\ell-k$ and $\eta_k^+(e)+1=\eta_k(-\infty)$ for any
$e\in E$ and $1\leq k\leq \ell-1$.

Since 
for any $\eta\in\tUUUUU^{(1)}$ and for any $i$ with $0\leq i\leq 2\ell$,
it holds that
$$
\eta(-\infty)\geq\eta^+(e_i)+1\geq 3,
$$
we see that $T^{\eta_1}$ is the unique monomial in $\omega_R$ with minimum degree 3.
In particular, $a(R)=-\deg T^{\eta_1}=-3$.
Therefore, we consider the morphism $\varphi\colon R\to\omega_R(3)$ of graded
$R$-modules with $\varphi(1)=T^{\eta_1}$.
$R$ is, by definition, almost \gor\ if and only if $\cok\varphi$ is an Ulrich module.

\begin{lemma}
\mylabel{lem:im phi}
It holds that
$$
(\image\varphi)(-3)=\bigoplus_{\eta\in\tUUUUU^{(1)}\atop\ell\eta(-\infty)-\eta^+(V)\geq\ell-1}
\KKK T^\eta.
$$
\end{lemma}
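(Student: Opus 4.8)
The map $\varphi\colon R\to\omega_R(3)$ sends $1\mapsto T^{\eta_1}$, so $\image\varphi$ is the graded $R$-submodule of $\omega_R(3)$ generated by $T^{\eta_1}$. Shifting back, $(\image\varphi)(-3)$ is the $R$-submodule of $\omega_R$ generated by $T^{\eta_1}$. Since $R$ is spanned over $\KKK$ by monomials $T^\lambda$ with $\lambda\in\tUUUUU^{(0)}$ (the semigroup ring picture), $(\image\varphi)(-3)$ is the $\KKK$-span of those monomials $T^{\eta_1+\lambda}$ with $\lambda\in\tUUUUU^{(0)}$. So the statement amounts to the set equality
$$
\eta_1+\tUUUUU^{(0)}=\{\eta\in\tUUUUU^{(1)}\mid \ell\eta(-\infty)-\eta^+(V)\geq\ell-1\}.
$$

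\textbf{The easy inclusion.} First I would check $\subseteq$. Take $\lambda\in\tUUUUU^{(0)}$ and set $\eta=\eta_1+\lambda$. Adding the defining inequalities of $\tUUUUU^{(1)}$-type for $\eta_1$ and of $\tUUUUU^{(0)}$-type for $\lambda$ gives $\eta\in\tUUUUU^{(1)}$ directly: $\eta(z)\ge 1+0=1$; for a maximal clique $K$, $\eta^+(K)\le (\eta_1(-\infty)-1)+\lambda(-\infty)=\eta(-\infty)-1$; similarly for odd chordless cycles of length $\ge 5$ (here $G$ itself is the only such cycle). For the margin bound, using $\eta_1^+(V)=2\ell+1$, $\eta_1(-\infty)=3$, and the cycle inequality $\lambda^+(V)\le \ell\lambda(-\infty)$ for $\lambda\in\tUUUUU^{(0)}$, I compute
$$
\ell\eta(-\infty)-\eta^+(V)=\bigl(3\ell-(2\ell+1)\bigr)+\bigl(\ell\lambda(-\infty)-\lambda^+(V)\bigr)\ge (\ell-1)+0=\ell-1.
$$

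\textbf{The harder inclusion.} For $\supseteq$, take $\eta\in\tUUUUU^{(1)}$ with $\ell\eta(-\infty)-\eta^+(V)\ge \ell-1$ and set $\lambda=\eta-\eta_1$; I must show $\lambda\in\tUUUUU^{(0)}$. The vertex condition $\lambda(z)=\eta(z)-1\ge 0$ is immediate. The clique and cycle conditions need the corresponding facts for $\eta$ \emph{and} the reverse-direction facts for $\eta_1$, namely $\eta_1^+(K)=\eta_1(-\infty)-1$ for maximal cliques of size $\le 3$ and $\eta_1^+(V)=\ell\eta_1(-\infty)-\ell+1-\ldots$; since these are equalities (not just inequalities) for $\eta_1$ — every edge $e$ satisfies $\eta_1^+(e)+1=\eta_1(-\infty)$, and the triangle case must be handled because a cycle graph has no triangle as a subgraph when $2\ell+1\ge 5$, so the only maximal cliques are the edges $e_i$ — subtracting works cleanly: $\lambda^+(K)=\eta^+(K)-\eta_1^+(K)\le (\eta(-\infty)-1)-(\eta_1(-\infty)-1)=\lambda(-\infty)$. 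For the single odd cycle $C=V$, I need $\lambda^+(V)\le \ell\lambda(-\infty)$, i.e. $\eta^+(V)-(2\ell+1)\le \ell(\eta(-\infty)-3)$, i.e. $\ell\eta(-\infty)-\eta^+(V)\ge \ell-1$ — which is exactly the margin hypothesis. So this inclusion is essentially a restatement of the hypothesis together with the fact that $\eta_1$ saturates the edge inequalities.

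\textbf{Main obstacle.} I expect the only subtlety is bookkeeping about which cliques and cycles appear in the definition of $\tUUUUU^{(n)}$ for the cycle graph $G$: one must note that $G$ has no triangles (for $2\ell+1\ge 5$), so the maximal cliques in $\msKKK(G)$ are precisely the edges $e_0,\dots,e_{2\ell}$, and the only chordless odd cycle of length $\ge 5$ is $V$ itself. Once that is pinned down, the proof is just the two inclusions above, each a short linear manipulation adding or subtracting the (in)equalities for $\eta_1$; no serious difficulty remains.
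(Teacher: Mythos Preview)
Your proposal is correct and follows essentially the same approach as the paper: both reduce the statement to the set equality $\eta_1+\tUUUUU^{(0)}=\{\eta\in\tUUUUU^{(1)}\mid \ell\eta(-\infty)-\eta^+(V)\ge\ell-1\}$ and verify the two inclusions by adding or subtracting $\eta_1$ and checking the vertex, edge, and cycle inequalities, using that $\eta_1$ saturates the edge inequalities. The only cosmetic difference is that the paper handles the inclusions in the opposite order and, for the inclusion $T^{\eta_1}R\subseteq\mathrm{RHS}$, invokes Fact~\ref{fact:symb power} to get $\eta\in\tUUUUU^{(1)}$ automatically rather than rechecking the inequalities directly as you do.
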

\begin{proof}
First note that $(\image\varphi)(-3)=T^{\eta_1}R$.

Let $\eta$ be an arbitrary element of $\tUUUUU^{(1)}$ with 
$\ell\eta(-\infty)-\eta^+(V)\geq\ell-1$.
Set $\mu=\eta-\eta_1$.
Then 
$$
\mu(x)=\eta(x)-\eta_1(x)=\eta(x)-1\geq 0
$$
for any $x\in V$,
\begin{eqnarray*}
\mu^+(e)&=&\eta^+(e)-\eta_1^+(e)\\
&=&\eta^+(e)-2\\
&=&(\eta^+(e)+1)-3\\
&\leq&\eta(-\infty)-\eta_1(-\infty)\\
&=&\mu(-\infty)
\end{eqnarray*}
for any $e\in E$ and
\begin{eqnarray*}
\ell\mu(-\infty)-\mu^+(V)&=&\ell\eta(-\infty)-\eta^+(V)-(\ell\eta_1(-\infty)-\eta_1^+(V))\\
&\geq&(\ell-1)-(\ell-1)\\
&=&0.
\end{eqnarray*}
Therefore, we see that $\mu\in\tUUUUU^{(0)}$.
Thus, we see that
$$
T^\eta=T^{\eta_1}T^\mu\in T^{\eta_1}R.
$$

On the other hand, if $T^\eta$ is a monomial in $T^{\eta_1}R$, then there is
$\mu\in\tUUUUU^{(0)}$ with $\eta=\mu+\eta_1$.
Since $T^\eta\in T^{\eta_1}R\subset\omega_R$, we see that
$\eta\in\tUUUUU^{(1)}$ by Fact \ref{fact:symb power}.
Further,
\begin{eqnarray*}
\ell\eta(-\infty)-\eta^+(V)&=&
\ell\mu(-\infty)-\mu^+(V)+\ell\eta_1(-\infty)-\eta_1^+(V)\\
&\geq&\ell\eta_1(-\infty)-\eta_1^+(V)\\
&=&\ell-1.
\end{eqnarray*}
Thus we see that
$$
(\image\varphi)(-3)=T^{\eta_1}R
=\bigoplus_{\eta\in\tUUUUU^{(1)}\atop\ell\eta(-\infty)-\eta^+(V)\geq\ell-1}\KKK T^\eta.
$$
\end{proof}

 Next, we set
$$
\msPPP\define\{f\in\stab(G)\mid f^+(V)=\ell\}.
$$
Then $\msPPP$ is a face of $\stab(G)$.
Further, we set 
$$
\tUUUUU^{(0)}_0\define\{\mu\in\tUUUUU^{(0)}\mid\mu^+(V)=\ell\mu(-\infty)\}
$$
and
$$
R^{(0)}\define\bigoplus_{\mu\in\tUUUUU^{(0)}_0}\KKK T^\mu.
$$
Then $R^{(0)}$ is a subalgebra of $R$ and the Ehrhart ring $E_\KKK[\msPPP]$ of $\msPPP$.
Note that $\mu_i\in\tUUUUU^{(0)}_0$ for $0\leq i\leq 2\ell$.

\begin{lemma}
\mylabel{lem:r0kt}
It holds that
$$
R^{(0)}=\ktmu.
$$
\end{lemma}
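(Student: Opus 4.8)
The plan is to prove the two inclusions of $R^{(0)}=\ktmu$ separately. The inclusion ``$\supseteq$'' is immediate: we have already noted that $\mu_i\in\tUUUUU^{(0)}_0$ for every $i$, and $\tUUUUU^{(0)}_0$ is closed under addition (for $\mu,\mu'\in\tUUUUU^{(0)}_0$ the vector $\mu+\mu'$ again satisfies $(\mu+\mu')(v_j)\geq0$ and $(\mu+\mu')(v_j)+(\mu+\mu')(v_{j+1})\leq(\mu+\mu')(-\infty)$, and $(\mu+\mu')^+(V)=\ell(\mu+\mu')(-\infty)$), so every monomial $\prod_i(T^{\mu_i})^{c_i}$ with $c_i\in\NNN$ lies in $R^{(0)}$.

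For ``$\subseteq$'' it suffices to show that every $\mu\in\tUUUUU^{(0)}_0$ can be written $\mu=\sum_{i=0}^{2\ell}c_i\mu_i$ with all $c_i\in\NNN$; then $T^\mu=\prod_i(T^{\mu_i})^{c_i}\in\ktmu$. I would first record that $\msPPP=\conv\{\nu_0,\ldots,\nu_{2\ell}\}$: since $\msPPP$ is a face of $\stab(G)$ it is the convex hull of those $\chi_S$ with $S$ a maximum stable set, and the maximum stable sets of $G$ are precisely $S_0,\ldots,S_{2\ell}$ (there are exactly $2\ell+1$ of them). Since moreover $R^{(0)}=E_\KKK[\msPPP]$, every $\mu\in\tUUUUU^{(0)}_0$ with $m\define\mu(-\infty)\geq1$ satisfies $\frac1m\mu|_V\in\msPPP$ (the case $m=0$ is trivial, as then $\mu=0$). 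Next, the $\mu_i$ are linearly independent over $\QQQ$: from $\sum_ic_i\mu_i=0$ the $-\infty$-coordinate gives $\sum_ic_i=0$ and the $v_j$-coordinates give $\sum_{t=0}^{\ell-1}c_{j-2t}=0$ for all $j$ (subscripts read mod $2\ell+1$); subtracting the equation for index $j$ from that for $j+2$ and using $-2\ell\equiv1\pmod{2\ell+1}$ gives $c_{k+1}=c_k$ for all $k$, whence all $c_i$ are equal, hence $0$. So $\mu$ has a unique representation $\mu=\sum_ic_i\mu_i$ with $c_i\in\QQQ$, and $\msPPP$ is a $2\ell$-simplex.

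It then remains to check that these $c_i$ are nonnegative integers. For integrality, write $\mu=\sum_ic_i\mu_i$ coordinatewise as $\mu(v_j)=\sum_{t=0}^{\ell-1}c_{j-2t}$; subtracting the $j$-equation from the $(j+2)$-equation gives the recursion $c_{k+1}-c_k=\mu(v_{k-2})-\mu(v_k)\in\ZZZ$, so $c_k-c_0\in\ZZZ$ for all $k$. Then $\sum_kc_k=m$ forces $(2\ell+1)c_0\in\ZZZ$, while any single coordinate equation, being a sum of exactly $\ell$ of the $c_i$, forces $\ell c_0\in\ZZZ$; since $\gcd(\ell,2\ell+1)=1$ we get $c_0\in\ZZZ$, and hence all $c_i\in\ZZZ$. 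For nonnegativity, writing $\frac1m\mu|_V=\sum_i\lambda_i\nu_i$ with $\lambda_i\geq0$ and $\sum_i\lambda_i=1$ (possible since $\frac1m\mu|_V\in\conv\{\nu_i\}$), affine independence of the $\nu_i$ forces $\lambda_i=c_i/m$, so $c_i=m\lambda_i\geq0$. This proves ``$\subseteq$'', and hence the lemma.

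The step I expect to be the real obstacle is the integrality of the $c_i$, equivalently the assertion that $\msPPP$ is a \emph{unimodular} simplex (that is, $\mu_0,\ldots,\mu_{2\ell}$ is a $\ZZZ$-basis of the lattice $\{\mu\in\ZZZ^{V^-}\mid\mu^+(V)=\ell\mu(-\infty)\}$). The recursion pins down the $c_i$ only up to the common additive shift $c_0$, and the degree relation $\sum_ic_i=m$ by itself leaves $c_0$ only in $\frac1{2\ell+1}\ZZZ$; the decisive extra input is that each individual Ehrhart-type equation $\mu(v_j)=\sum_tc_{j-2t}$ involves exactly $\ell$ of the unknowns, so together with $\gcd(\ell,2\ell+1)=1$ this forces $c_0\in\ZZZ$. (Alternatively one could check that the circulant matrix $(\nu_i(v_j))_{0\leq i,j\leq 2\ell}$ has determinant $\pm\ell$ and deduce that the $\ZZZ$-coordinate matrix of $\mu_0,\ldots,\mu_{2\ell}$ has determinant $\pm1$, but the arithmetic argument is shorter and cleaner.) Everything else is routine.
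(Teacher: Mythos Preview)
Your argument is correct, and it is genuinely different from the paper's. The paper proves the inclusion $R^{(0)}\subset\ktmu$ by a direct inductive ``peeling'' argument: given $\mu\in\tUUUUU^{(0)}_0$ with $\mu(-\infty)>0$, it locates, through a case analysis on whether all $\mu(v_i)>0$ and (if not) on the position of the first nonzero among the even-indexed vertices, an index $i$ such that $\mu-\mu_i\in\tUUUUU^{(0)}_0$, then recurses. No geometry of $\msPPP$ and no lattice arithmetic is invoked.

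Your route is more structural: you identify $\msPPP$ as the simplex on the $2\ell+1$ maximum-stable-set vectors $\nu_i$, establish linear independence of the $\mu_i$ via the difference relations $c_{k+1}-c_k=\mu(v_{k-2})-\mu(v_k)$, and then pin down integrality of the unique rational coefficients by combining $(2\ell+1)c_0\in\ZZZ$ with $\ell c_0\in\ZZZ$ and $\gcd(\ell,2\ell+1)=1$. This not only proves the lemma but simultaneously shows that $\msPPP$ is a \emph{unimodular} $2\ell$-simplex; in particular $T^{\mu_0},\ldots,T^{\mu_{2\ell}}$ are algebraically independent and $R^{(0)}$ is a standard-graded polynomial ring, which the paper establishes separately (Lemma~\ref{lem:isom poly}) by computing $\dim\msPPP$ \emph{after} the present lemma. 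So your approach buys the next lemma for free and avoids the combinatorial case analysis, at the cost of needing the (easy) classification of maximum stable sets of $C_{2\ell+1}$ and the short arithmetic argument with the $\gcd$. Both proofs are of comparable length; yours is arguably more conceptual, the paper's more hands-on.
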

\begin{proof}
Since
$\mu_i\in\tUUUUU^{(0)}_0$ for $0\leq i\leq 2\ell$, it is clear that 
$R^{(0)}\supset\ktmu$.
Let $\mu$ be an arbitrary element of $\tUUUUU^{(0)}_0$.
We prove by induction on $\mu(-\infty)$ that $T^\mu\in\ktmu$.

If $\mu(-\infty)=0$, then $\mu=0$ and 
$T^\mu=1\in\ktmu$.
Suppose that $\mu(-\infty)>0$.
We first consider the case where $\mu(x)>0$ for any $x\in V$.
Since 
$$
(2\ell+1)\mu(-\infty)>2\ell\mu(-\infty)=2\mu^+(V)=\sum_{i=0}^{2\ell}\mu^+(e_i),
$$
we see that there is $i$ with $\mu^+(e_i)<\mu(-\infty)$.
By symmetry, we may assume that 
$$
\mu^+(e_0)<\mu(-\infty).
$$
Set $\mu'=\mu-\mu_2$.
Then
$$
\mu'(x)=\mu(x)-\mu_2(x)\geq \mu(x)-1\geq0
$$
for any $x\in V$, since $\mu(x)>0$.
If $e\in E$ and $e\neq e_0$, then 
$$
(\mu')^+(e)=\mu^+(e)-\mu_2^+(e)=\mu^+(e)-1\leq\mu(-\infty)-1=\mu'(-\infty).
$$
Further, 
$$
(\mu')^+(e_0)=\mu^+(e_0)-\mu^+_2(e_0)=\mu^+(e_0)<\mu(-\infty)
$$
by assumption.
Therefore,
$$
(\mu')^+(e_0)\leq\mu(-\infty)-1=\mu'(-\infty).
$$
Moreover,
$$
(\mu')^+(V)=\mu^+(V)-\mu_2^+(V)=\ell\mu(-\infty)-\ell\mu_2(-\infty)=\ell\mu'(-\infty).
$$
Therefore, $\mu'\in\tUUUUU^{(0)}_0$ and by induction hypothesis, we see that
$
T^{\mu'}\in\ktmu
$.
Thus,
$$
T^\mu=T^{\mu'}T^{\mu_2}\in\ktmu.
$$

Next consider the case where $\mu(x)=0$ for some $x\in V$.
By symmetry, we may assume that $\mu(v_0)=0$.
Since
$
\sum_{i=0}^\ell\mu^+(e_{2i-1})=
\mu(v_0)+\sum_{i=0}^\ell\mu^+(e_{2i-1})=\mu^+(V)=\ell\mu(-\infty)
$
and 
$\mu^+(e_j)\leq\mu(-\infty)$ for any $j$,
we see that
$$
\mu(e_{2i-1})=\mu(-\infty)\qquad\mbox{for any $1\leq i\leq \ell$}.
$$

First consider the case where $\mu(v_{2i})=0$ for any $1\leq i\leq \ell$.
In this case, 
$\mu(v_{2i-1})=\mu^+(e_{2i-1})=\mu(-\infty)>0$ for any $1\leq i\leq \ell$.
Set $\mu'=\mu-\mu_1$.
Then
$$
\mu'(v_{2i-1})=\mu(v_{2i-1})-\mu_1(v_{2i-1})=\mu(v_{2i-1})-1\geq 0
$$
for $1\leq i\leq \ell$ and
$$
\mu'(x)=\mu(x)-\mu_1(x)=\mu(x)\geq 0
$$
for any $x\in V\setminus\{v_1, v_3, \ldots, v_{2\ell-1}\}$.
Further,
$$
(\mu')^+(e_{2i-1})=\mu^+(e_{2i-1})-\mu^+_1(e_{2i-1})
=\mu(-\infty)-1=\mu'(-\infty)
$$
for any $1\leq i\leq \ell$,
$$
(\mu')^+(e_{2i})=\mu^+(e_{2i})-\mu^+_1(e_{2i})=\mu(v_{2i+1})-1=\mu(-\infty)-1=\mu'(-\infty)
$$
for $0\leq i\leq \ell-1$ and
$$
(\mu')^+(e_{2\ell})=\mu^+(e_{2\ell})-\mu_1^+(e_{2\ell})=\mu(v_{2\ell})+\mu(v_0)=0\leq
\mu(-\infty)-1=\mu'(-\infty).
$$
Moreover,
$$
(\mu')^+(V)=\mu^+(V)-\mu_1^+(V)=\ell\mu(-\infty)-\ell\mu_1(-\infty)=\ell\mu'(-\infty).
$$
Therefore, $\mu'\in\tUUUUU^{(0)}_0$ and by induction hypothesis, we see that
$T^{\mu'}\in\ktmu$.
Thus,
$$
T^\mu=T^{\mu'}T^{\mu_1}\in\ktmu.
$$

If $\mu(v_{2i})\neq0$ for some $i$ with $1\leq i\leq\ell$, set
$j=\min\{i\mid 1\leq i\leq \ell, \mu(v_{2i})\neq 0\}$.
For $i$ with $1\leq i\leq j-1$,
$$
\mu(v_{2i-1})=\mu(v_{2i-1})+\mu(v_{2i})=\mu^+(e_{2i-1})=\mu(-\infty)>0,
$$
since $\mu^+(e_{2i-1})=\mu(-\infty)$ and $\mu(v_{2i})=0$.
Further, if $j<\ell$, then $\mu(v_{2j})>0$ and 
$\mu(v_{2j})+\mu(v_{2j+1})=\mu^+(e_{2j})\leq\mu(-\infty)$,
we see that $\mu(v_{2j+1})<\mu(-\infty)$.
Moreover, since
$\mu(v_{2j+1})+\mu(v_{2j+2})=\mu^+(e_{2j+1})=\mu(-\infty)$,
we see that $\mu(v_{2j+2})>0$.
By the same argument and induction, we see that
$$
\mu(v_{2i})>0 \qquad \mbox{for $j\leq i\leq \ell$}.
$$
Set $\mu'=\mu-\mu_{2j}$.
Since
$$
\mu_{2j}(v_k)=
\left\{
\begin{array}{ll}
1,&\qquad\mbox{$k\in\{1,3,\ldots, 2j-3, 2j,2j+2, \ldots, 2\ell\}$},\\
0,&\qquad\mbox{otherwise}
\end{array}
\right.
$$
and
$$
\mu(v_{k})>0\qquad\mbox{if $k\in\{1,3,\ldots, 2j-3, 2j,2j+2, \ldots, 2\ell\}$},
$$
we see that
$$
\mu(x)\geq 0\qquad\mbox{for any $x\in V$}.
$$
Further, if $i\neq 2j-2$, then,
since $\mu^+_{2j}(e_i)=\mu_{2j}(-\infty)=1$,
we see that
$$
(\mu')^+(e_i)=\mu^+(e_i)-\mu^+_{2j}(e_i)
\leq\mu(-\infty)-\mu_{2j}(-\infty)=\mu'(-\infty).
$$
Moreover, since
$\mu(v_{2j-1})<\mu(v_{2j-1})+\mu(v_{2j})=\mu^+(e_{2j-1})
=\mu(-\infty)$ and
$\mu(v_{2j-2})=0$ 
by the definition of $j$,
we see that
$$
(\mu')^+(e_{2j-2})=\mu^+(e_{2j-2})-\mu^+_{2j}(e_{2j-2})
=\mu(v_{2j-2})+\mu(v_{2j-1})\leq\mu(-\infty)-1=\mu'(-\infty).
$$
Finally,
$$
(\mu')^+(V)=\mu^+(V)-\mu^+_{2j}(V)=\ell\mu(-\infty)-\ell\mu_{2j}(-\infty)=\ell\mu'(-\infty).
$$
Thus, we see that $\mu'\in\tUUUUU^{(0)}_0$.
Therefore, by induction hypothesis, we see that
$T^{\mu'}\in\ktmu$
and
$$
T^\mu=T^{\mu'}T^{\mu_{2j}}\in\ktmu.
$$
\end{proof}

Next we consider the dimension of $\msPPP$.
The matrix whose columns correspond to $v_0$, $v_1$ \ldots, $v_{2\ell}$ and rows
correspond to $\nu_3-\nu_1$, $\nu_4-\nu_2$, \ldots, $\nu_{2\ell}-\nu_{2\ell-2}$, 
$\nu_0-\nu_{2\ell-1}$ and $\nu_1-\nu_{2\ell}$ respectively is
$$
\left[
\begin{array}{cccccccccc}
1&-1&0&0&\cdots&\cdots&0&0&0\\
0&1&-1&0&\cdots&\cdots&0&0&0\\
0&0&1&-1&\cdots&\cdots&0&0&0\\
\vdots&\vdots&&\ddots&\ddots&&\vdots&\vdots&\vdots\\
\vdots&\vdots&&&\ddots&\ddots&\vdots&\vdots&\vdots\\
\vdots&\vdots&&&&\ddots&\ddots&\vdots&\vdots\\
0&0&0&0&\cdots&\cdots&1&-1&0\\
0&0&0&0&\cdots&\cdots&0&1&-1\\
\end{array}
\right].
$$
This is a matrix of rank $2\ell$.
Since $\nu_i\in\msPPP$ for $0\leq i\leq 2\ell$,
we see that $\dim\msPPP\geq 2\ell$.
On the other hand, $E_\KKK[\msPPP]=R^{(0)}=\ktmu$ by Lemma \ref{lem:r0kt}.
Since $\dim E_\KKK[\msPPP]=\dim\msPPP+1$, we see that 
$\dim\msPPP=2\ell$.
Moreover, we see that $T^{\mu_0}$, $T^{\mu_1}$, \ldots, $T^{\mu_{2\ell}}$
are algebraically independent over $\KKK$.
Since $\deg T^{\mu_i}=\mu(-\infty)=1$ for $0\leq i\leq 2\ell$, we see the following.

\begin{lemma}
\mylabel{lem:isom poly}
$R^{(0)}$ is isomorphic to the polynomial ring with $2\ell+1$ variables equipped with
the standard grading.
\end{lemma}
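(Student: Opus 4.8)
The plan is to exhibit $R^{(0)}$ as the image of a polynomial ring under a surjective graded $\KKK$-algebra homomorphism whose kernel must vanish for dimension reasons. Concretely, let $\KKK[X_0,X_1,\ldots,X_{2\ell}]$ carry the standard grading with $\deg X_i=1$, and define the $\KKK$-algebra homomorphism $\psi\colon\KKK[X_0,\ldots,X_{2\ell}]\to R^{(0)}$ by $\psi(X_i)=T^{\mu_i}$ for $0\le i\le 2\ell$. Since $\deg T^{\mu_i}=\mu_i(-\infty)=1$, the map $\psi$ preserves degree, i.e.\ it is a morphism of graded $\KKK$-algebras; and by Lemma~\ref{lem:r0kt}, which gives $R^{(0)}=\ktmu$, the map $\psi$ is surjective.

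The remaining point is to show $\ker\psi=0$. First I would note that $R^{(0)}\subset R$ is a subring of the Laurent polynomial ring $\KKK[T_x^{\pm1}\mid x\in V^-]$, hence an integral domain, so $\ker\psi$ is a prime ideal and $\KKK[X_0,\ldots,X_{2\ell}]/\ker\psi\cong R^{(0)}$. Then I would invoke the dimension count already carried out before the lemma: the matrix of the vectors $\nu_3-\nu_1,\ldots,\nu_{2\ell}-\nu_{2\ell-2},\nu_0-\nu_{2\ell-1},\nu_1-\nu_{2\ell}$ has rank $2\ell$, and $\nu_i\in\msPPP$ for every $i$, so $\dim\msPPP=2\ell$; combining this with the dimension formula for Ehrhart rings and Lemma~\ref{lem:r0kt} gives
$$
\dim R^{(0)}=\dim E_\KKK[\msPPP]=\dim\msPPP+1=2\ell+1.
$$
If $\ker\psi\neq 0$, then $\dim\bigl(\KKK[X_0,\ldots,X_{2\ell}]/\ker\psi\bigr)\le 2\ell$, contradicting the displayed equality. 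Hence $\ker\psi=0$, so $\psi$ is an isomorphism, and since both source and target are standardly graded it is an isomorphism of graded rings.

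I do not expect a real obstacle: the statement is a formal consequence of the presentation $R^{(0)}=\ktmu$ from Lemma~\ref{lem:r0kt}, the normalization $\mu_i(-\infty)=1$, and the Krull dimension computation. The only substantive fact needed — that $T^{\mu_0},\ldots,T^{\mu_{2\ell}}$ are algebraically independent over $\KKK$ — is precisely what the dimension argument supplies, and its nontrivial ingredient (the rank of the matrix of differences of the $\nu_i$) has already been established in the discussion immediately preceding the lemma, so the write-up should be short.
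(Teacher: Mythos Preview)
Your proposal is correct and matches the paper's argument essentially verbatim: the paper derives the lemma from the preceding discussion, where the rank-$2\ell$ matrix of the $\nu_i$-differences gives $\dim\msPPP\ge 2\ell$, Lemma~\ref{lem:r0kt} gives $R^{(0)}=\ktmu$, and combining these with $\dim E_\KKK[\msPPP]=\dim\msPPP+1$ forces $\dim R^{(0)}=2\ell+1$ and hence the algebraic independence of $T^{\mu_0},\ldots,T^{\mu_{2\ell}}$---exactly your kernel-vanishing argument for $\psi$. The only cosmetic difference is that the paper phrases the conclusion as ``algebraically independent'' rather than ``$\ker\psi=0$.''
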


For $k$ with $2\leq k\leq\ell-1$, we set
$$
C_k\define\bigoplus_{\eta\in\tUUUUU^{(1)}\atop\ell\eta(-\infty)-\eta^+(V)=\ell-k}\KKK T^\eta.
$$
Then we see the following.

\begin{lemma}
\mylabel{lem:ck r0 free}
$C_k$ is a rank 1 free $R^{(0)}$-module with basis $T^{\eta_k}$
for $2\leq k\leq \ell-1$.
\end{lemma}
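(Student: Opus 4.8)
The plan is to show that multiplication by $T^{\eta_k}$ defines an $R^{(0)}$-module isomorphism $R^{(0)}\to C_k$. Since $R$ is a domain and $T^{\eta_k}\neq 0$ this map is injective, so the real content is the equality of $\KKK$-vector spaces $C_k=T^{\eta_k}R^{(0)}$; once that is established, $T^{\eta_k}$ is by definition a free $R^{(0)}$-basis of $C_k$. Two preliminary remarks streamline the bookkeeping. First, since $G$ is a cycle of length $2\ell+1\geq 7$, the only maximal element of $\msKKK$ is an edge and the only chordless odd cycle of length at least $5$ is $G$ itself, so membership in $\tUUUUU^{(1)}$, $\tUUUUU^{(0)}$ and $\tUUUUU^{(0)}_0$ is governed solely by the bounds on the values at vertices, on the edge sums $\eta^+(e)$, and on $\eta^+(V)$. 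Second, a one-line computation gives $\ell\eta_k(-\infty)-\eta_k^+(V)=\ell(2k+1)-(2\ell+1)k=\ell-k$, so $T^{\eta_k}\in C_k$.

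For the inclusion $T^{\eta_k}R^{(0)}\subseteq C_k$ I would take $\mu\in\tUUUUU^{(0)}_0$, put $\eta=\eta_k+\mu$, and verify the defining inequalities directly: $\eta(z)=k+\mu(z)\geq k\geq 1$ for $z\in V$; $\eta^+(e)=2k+\mu^+(e)\leq 2k+\mu(-\infty)=\eta(-\infty)-1$ for $e\in E$; and $\eta^+(V)=(2\ell+1)k+\ell\mu(-\infty)\leq\ell\eta(-\infty)-1$, the last step being exactly $k\leq\ell-1$. Hence $\eta\in\tUUUUU^{(1)}$, and $\ell\eta(-\infty)-\eta^+(V)=(\ell\eta_k(-\infty)-\eta_k^+(V))+(\ell\mu(-\infty)-\mu^+(V))=\ell-k$, the last summand vanishing because $\mu\in\tUUUUU^{(0)}_0$; thus $T^\eta\in C_k$.

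The reverse inclusion $C_k\subseteq T^{\eta_k}R^{(0)}$ is the heart of the argument and the only place where a genuine idea is required. Given $\eta\in\tUUUUU^{(1)}$ with $\ell\eta(-\infty)-\eta^+(V)=\ell-k$, equivalently $\eta^+(V)=\ell(\eta(-\infty)-1)+k$, I would first prove the per-vertex lower bound $\eta(v_i)\geq k$ for every $i$. The point is that $V\setminus\{v_i\}$ is a path on $2\ell$ vertices, hence splits as a disjoint union of $\ell$ edges $f_1,\dots,f_\ell$ of $G$, so $\eta^+(V)=\eta(v_i)+\sum_{j=1}^{\ell}\eta^+(f_j)\leq\eta(v_i)+\ell(\eta(-\infty)-1)$; comparing with the expression for $\eta^+(V)$ forces $\eta(v_i)\geq k$. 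Combining this with $\eta^+(e)\leq\eta(-\infty)-1$ also gives $\eta(-\infty)\geq 2k+1$. Now set $\mu=\eta-\eta_k$; then $\mu(-\infty)=\eta(-\infty)-(2k+1)\geq 0$, $\mu(z)=\eta(z)-k\geq 0$ for $z\in V$, $\mu^+(e)=\eta^+(e)-2k\leq\mu(-\infty)$ for $e\in E$, and $\mu^+(V)=\eta^+(V)-(2\ell+1)k=\ell\mu(-\infty)$, so $\mu\in\tUUUUU^{(0)}_0$ and $T^\eta=T^{\eta_k}T^\mu\in T^{\eta_k}R^{(0)}$.

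Putting the two inclusions together gives $C_k=T^{\eta_k}R^{(0)}$; injectivity of multiplication by $T^{\eta_k}$ on the domain $R$ shows that $T^{\eta_k}$ is a free basis, so $C_k$ is a free $R^{(0)}$-module of rank $1$ (consistent with $R^{(0)}$ being a polynomial ring, Lemma \ref{lem:isom poly}). I expect the lower bound $\eta(v_i)\geq k$ to be the only nonroutine step; every other verification is a mechanical substitution into the inequalities defining $\tUUUUU^{(1)}$ and $\tUUUUU^{(0)}_0$.
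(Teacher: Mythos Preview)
Your proof is correct and follows essentially the same approach as the paper: both establish $C_k=T^{\eta_k}R^{(0)}$ by checking the two inclusions and then invoke that $R$ is a domain. The only cosmetic difference is in the order of the key step for the inclusion $C_k\subseteq T^{\eta_k}R^{(0)}$: you first prove the per-vertex bound $\eta(v_i)\geq k$ directly on $\eta$ via the edge decomposition of $V\setminus\{v_i\}$ and then deduce $\mu(x)\geq 0$, whereas the paper first verifies $\mu^+(V)=\ell\mu(-\infty)$ and $\mu^+(e)\leq\mu(-\infty)$ and then obtains $\mu(x)\geq 0$ by the same edge-decomposition argument applied to $\mu$, phrased as a contradiction. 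The underlying computation is identical.
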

\begin{proof}
First, if $\mu\in\tUUUUU^{(0)}_0$, then it is easily verified that
$\mu+\eta_k\in\tUUUUU^{(1)}$ and
$$
\ell(\mu+\eta_k)(-\infty)-(\mu+\eta_k)^+(V)
=\ell\eta_k(-\infty)-\eta^+_k(V)=\ell-k.
$$
Therefore $T^\mu T^{\eta_k}\in C_k$.

Conversely, assume that $\eta\in\tUUUUU^{(1)}$ and $\ell\eta(-\infty)-\eta^+(V)=\ell-k$.
Set $\mu=\eta-\eta_k$.
Then 
\begin{eqnarray*}
\mu^+(V)&=&\eta^+(V)-\eta^+_k(V)\\
&=&\ell\eta(-\infty)-(\ell-k)-(\ell\eta_k(-\infty)-(\ell-k))\\
&=&\ell(\eta(-\infty)-\eta_k(-\infty))\\
&=&\ell\mu(-\infty)
\end{eqnarray*}
and, since $\eta_k^+(e)+1=\eta_k(-\infty)$,
\begin{eqnarray*}
\mu^+(e)&=&(\eta^+(e)+1)-(\eta^+_k(e)+1)\\
&\leq&\eta(-\infty)-\eta_k(-\infty)\\
&=&\mu(-\infty).
\end{eqnarray*}

Finally, we show that $\mu(x)\geq 0$ for any $x\in V$.
Assume the contrary.
Then, by symmetry, we may assume that $\mu(v_0)<0$.
Then
$$
\mu^+(V)=\mu(v_0)+\sum_{i=1}^\ell\mu^+(e_{2i-1})<\ell\mu(-\infty),
$$
contradicting the fact shown above.
Therefore, $\mu(x)\geq 0$ for any $x\in V$ and we see that $\mu\in \tUUUUU^{(0)}_0$.
Thus,
$$
C_k=\left(\bigoplus_{\mu\in\tUUUUU^{(0)}_0}\KKK T^\mu\right)T^{\eta_k}=T^{\eta_k}R^{(0)}.
$$
Since $C_k$ and $R^{(0)}$ are contained in a domain $R$, we see that 
$C_k$ is a rank 1 free $R^{(0)}$-module with basis $T^{\eta_k}$.
\end{proof}

Since 
\begin{eqnarray*}
\omega_R&=&\bigoplus_{\eta\in\tUUUUU{(1)}}\KKK T^\eta\\
&=&
\left(\bigoplus_{\eta\in\tUUUUU^{(1)}\atop\ell\eta(-\infty)-\eta^+(V)\geq\ell-1}\KKK T^\mu\right)
\oplus\left(\bigoplus_{k=2}^{\ell-1}\left(\bigoplus_{\eta\in\tUUUUU^{(1)}\atop\ell\eta(-\infty)-\eta^+(V)=\ell-k}\KKK T^\eta\right)\right)\\
&=&(\image\varphi)(-3)\oplus\left(\bigoplus_{k=2}^{\ell-1} C_k\right),
\end{eqnarray*}
we see that
$$
\cok\varphi\cong\left(\bigoplus_{k=2}^{\ell-1} C_k\right)(3)
$$
as graded $R^{(0)}$-modules.
Since each $C_k$ is a free $R^{(0)}$-module, 
$R^{(0)}$ is isomorphic to a polynomial ring with $2\ell+1$ variables over $\KKK$
and multiplicity of a module can be computed by its Hilbert series,
we see that
$$
e(\cok\varphi)=\ell-2.
$$

Next, we show that $T^{\eta_1}$, $T^{\eta_2}$, \ldots, $T^{\eta_{\ell-1}}$ is a minimal
system of generators of $\omega_R$.
Assume the contrary.
Then there are $i$ and $j$ with $i\neq j$ and $\mu\in\tUUUUU^{(0)}$ such that
$$
T^{\eta_i}=T^\mu T^{\eta_j}.
$$
Since $i=\eta_i(v_0)=\mu(v_0)+\eta_j(v_0)\geq j$,
we see that $i>j$ and 
$\mu(x)=\eta_i(x)-\eta_j(x)=i-j$ for any $x\in V$.
On the other hand, it holds that
$$
\mu(-\infty)=\eta_i(-\infty)-\eta_j(-\infty)=(2i+1)-(2j+1)=2(i-j).
$$
Therefore,
$$
\ell\mu(-\infty)-\mu^+(V)=2\ell(i-j)-(2\ell+1)(i-j)<0,
$$
contradicting the fact that $\mu\in\tUUUUU^{(0)}$.

Thus, we see that $T^{\eta_1}$, $T^{\eta_2}$, \ldots, $T^{\eta_{\ell-1}}$ is a minimal
system of generators of $\omega_R$ and
$$
\mu(\cok\varphi)=\ell-2.
$$
Therefore, we see the following.

\begin{thm}
\mylabel{thm:almost gor}
$\cok\varphi$ is an Ulrich module and $R=\eksg$ is an almost \gor\ ring.
\end{thm}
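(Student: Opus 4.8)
The plan is to assemble the facts established immediately before the statement. By the criterion recalled just before Lemma~\ref{lem:im phi}, it suffices to prove that $\cok\varphi$ is an Ulrich $R$-module, that is, that $\cok\varphi$ is \cm\ and $\mu(\cok\varphi)=e(\cok\varphi)$. The equality $\mu(\cok\varphi)=e(\cok\varphi)=\ell-2$ is precisely what the two computations preceding the theorem supply, so the only remaining point is the \cm\ property of $\cok\varphi$.

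First I would observe that $\varphi$ is injective: being an $R$-module homomorphism with $\varphi(1)=T^{\eta_1}$, it is multiplication by the nonzero element $T^{\eta_1}$ of the domain $R$. Hence we have a short exact sequence of graded $R$-modules
$$
0\longrightarrow R\stackrel{\varphi}{\longrightarrow}\omega_R(3)\longrightarrow\cok\varphi\longrightarrow 0.
$$
Since $R$ is \cm\ (see Section~\ref{sec:pre}), its canonical module $\omega_R$ is a maximal \cm\ module, so $\operatorname{depth} R=\operatorname{depth} \omega_R(3)=\dim R$; applying the depth lemma to the displayed sequence gives $\operatorname{depth} \cok\varphi\ge\dim R-1$. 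On the other hand $R$ and $\omega_R$ both have rank one, so $\varphi$ becomes an isomorphism after tensoring with the fraction field $Q(R)$, whence $\cok\varphi$ is a torsion $R$-module and $\dim\cok\varphi\le\dim R-1$. Combining the two estimates, $\cok\varphi$ is \cm\ of dimension $\dim R-1=2\ell+1$. (Alternatively, the depth bound follows from Lemma~\ref{lem:ck r0 free}, which exhibits $\cok\varphi$ as a free module over the subring $R^{(0)}$: the $2\ell+1$ degree-one generators $T^{\mu_0},\dots,T^{\mu_{2\ell}}$ of the polynomial ring $R^{(0)}$ then form an $R^{(0)}$-regular, hence $R$-regular, sequence on $\cok\varphi$.)

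With the \cm\ property in hand, $\cok\varphi$ is \cm\ with $\mu(\cok\varphi)=e(\cok\varphi)=\ell-2$, so it is an Ulrich module; and it is nonzero, since $\ell\ge3$ makes the range $2\le k\le\ell-1$ nonempty, consistently with $R$ not being \gor. By the definition of the almost \gor\ property from \cite{gtt} recalled above, $R=\eksg$ is an almost \gor\ graded ring. The only step needing real care is the \cm\ property of $\cok\varphi$: one is tempted to read it off directly from the $R^{(0)}$-freeness together with $\dim R^{(0)}=\dim R-1$, but restriction of scalars along $R^{(0)}\hookrightarrow R$ does not preserve Krull dimension, since $R$ is not module-finite over $R^{(0)}$; thus the upper bound on $\dim\cok\varphi$ must be obtained separately (via torsion-ness), while only the depth is inherited from the free $R^{(0)}$-structure. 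Keeping these two halves apart is the single subtlety in what is otherwise a matter of collecting the earlier results.
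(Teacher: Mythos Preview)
Your proposal is correct and takes essentially the same approach as the paper: both collect the two computations $e(\cok\varphi)=\ell-2$ and $\mu(\cok\varphi)=\ell-2$ established just before the theorem and invoke the almost \gor\ criterion from \cite{gtt}. The paper simply writes ``Therefore, we see the following'' and states the theorem, leaving the \cm\ property of $\cok\varphi$ implicit (it is automatic in the \cite{gtt} framework from the short exact sequence, exactly by the depth-lemma argument you spell out), so your version is a faithful and slightly more explicit rendering of the same proof.
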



\section{Hibi-Tsuchiya's conjecture}
\mylabel{sec:ht conj}

Hibi and Tsuchiya \cite[Conjecture 1]{ht} made a conjecture about the
h-vector of the Ehrhart rings of cycle graphs.
In this section, we prove that the conjecture is true.

Let $(h_0, h_1, \ldots, h_s)$, $h_s\neq0$ be the h-vector of $R=\eksg$.
Since $\dim R=2\ell+2$ and $a(R)=-3$, we see that $s=\dim R+a(R)=2\ell-1$.
Hibi and Tsuchiya made the following.

\begin{conj}
\mylabel{conj:ht}
If $\ell\geq 3$, then the h-vector of $R$ is the following form.
$$
(1, h_1, h_2, \ldots, h_{\ell-1}, h_{\ell-1}+(-1)^{\ell-1}, h_{\ell-2}+(-1)^{\ell-2},\ldots, 
h_3-1, h_2+1, h_1, 1).
$$
\end{conj}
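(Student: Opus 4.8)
The plan is to compute the Hilbert series of $\cok\varphi$ in two ways and compare. Since $R$ is a domain and $T^{\eta_1}\neq 0$, the map $\varphi\colon R\to\omega_R(3)$ is injective, so there is a short exact sequence of graded modules $0\to R\to\omega_R(3)\to\cok\varphi\to 0$, and the shift by $3$ makes it homogeneous precisely because $a(R)=-\deg T^{\eta_1}=-3$. Write $H_R(t)=h(t)/(1-t)^d$ with $d=\dim R=2\ell+2$ and $h(t)=\sum_{i=0}^{2\ell-1}h_it^i$ (so $s=\dim R+a(R)=2\ell-1$), and let $h^{\ast}(t)=\sum_{i=0}^{2\ell-1}h_{2\ell-1-i}t^i$ be its reciprocal polynomial. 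The standard identity $H_{\omega_R}(t)=(-1)^{d}H_R(t^{-1})$ gives $H_{\omega_R}(t)=t^{d}h(t^{-1})/(1-t)^d=t^{3}h^{\ast}(t)/(1-t)^d$, hence
$$
H_{\cok\varphi}(t)=t^{-3}H_{\omega_R}(t)-H_R(t)=\frac{h^{\ast}(t)-h(t)}{(1-t)^{2\ell+2}}=\frac{\sum_{j=0}^{2\ell-1}\bigl(h_{2\ell-1-j}-h_j\bigr)t^j}{(1-t)^{2\ell+2}}.
$$

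On the other hand, Lemmas \ref{lem:im phi}, \ref{lem:isom poly}, \ref{lem:ck r0 free} and the computation following Lemma \ref{lem:ck r0 free} give $\cok\varphi\cong\bigl(\bigoplus_{k=2}^{\ell-1}C_k\bigr)(3)$ as graded modules, where each $C_k=T^{\eta_k}R^{(0)}$ is free of rank $1$ over the standard-graded polynomial ring $R^{(0)}$ in $2\ell+1$ variables, generated by $T^{\eta_k}$ in degree $\eta_k(-\infty)=2k+1$; thus $C_k\cong R^{(0)}\bigl(-(2k+1)\bigr)$ and
$$
H_{\cok\varphi}(t)=t^{-3}\sum_{k=2}^{\ell-1}\frac{t^{2k+1}}{(1-t)^{2\ell+1}}=\frac{t^{2}+t^{4}+\cdots+t^{2\ell-4}}{(1-t)^{2\ell+1}}=\frac{(1-t)\bigl(t^{2}+t^{4}+\cdots+t^{2\ell-4}\bigr)}{(1-t)^{2\ell+2}}.
$$
Equating numerators over the common denominator $(1-t)^{2\ell+2}$ will yield the polynomial identity
$$
\sum_{j=0}^{2\ell-1}\bigl(h_{2\ell-1-j}-h_j\bigr)t^j=(1-t)\sum_{k=2}^{\ell-1}t^{2k-2}=\sum_{m=2}^{2\ell-3}(-1)^{m}t^{m},
$$
so that $h_{2\ell-1-j}-h_j=(-1)^j$ for $2\le j\le 2\ell-3$ and $h_{2\ell-1-j}=h_j$ for $j\in\{0,1,2\ell-2,2\ell-1\}$. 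In particular $h_{2\ell-1}=h_0=1$, $h_{2\ell-2}=h_1$, and $h_{2\ell-1-j}=h_j+(-1)^j$ for $2\le j\le\ell-1$; substituting $j=2,3,\dots,\ell-1$ places $h_2+1,\,h_3-1,\,\dots,\,h_{\ell-1}+(-1)^{\ell-1}$ in positions $2\ell-3,\,2\ell-4,\,\dots,\,\ell$ of the $h$-vector, which is precisely the shape predicted in Conjecture \ref{conj:ht}.

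The essential input — the decomposition of $\cok\varphi$ into shifted free $R^{(0)}$-modules and the identification of $R^{(0)}$ with a polynomial ring — is already available from Section \ref{sec:ag}, so the remaining work is only elementary manipulation of rational functions. The one step that genuinely requires care is the bookkeeping of gradings and signs: one must check that after the shift $\omega_R(3)$ the map $\varphi$ is homogeneous of degree $0$ (this is where $a(R)=-3$ enters) and that the factor $(-1)^{d}$ in the duality identity, with $d=2\ell+2$, is handled correctly, so that the two computed expressions for $H_{\cok\varphi}(t)$ really sit over the same denominator $(1-t)^{2\ell+2}$ before their numerators are identified.
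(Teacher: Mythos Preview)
Your proof is correct and follows essentially the same route as the paper: both compute the Hilbert series of $\cok\varphi$ in two ways---once from the exact sequence $0\to R\to\omega_R(3)\to\cok\varphi\to0$ together with Stanley's duality formula for $H(\omega_R,\lambda)$, and once from the decomposition $\cok\varphi\cong\bigl(\bigoplus_{k=2}^{\ell-1}C_k\bigr)(3)$ with each $C_k$ free of rank one over the polynomial ring $R^{(0)}$---and then compare numerators over $(1-\lambda)^{2\ell+2}$ to read off $h_{2\ell-1-j}-h_j$. Your write-up is slightly more explicit about the injectivity of $\varphi$ and the sign bookkeeping in the duality identity, but there is no substantive difference from the paper's argument.
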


Now we prove the following.

\begin{thm}
\mylabel{thm:ht conj}
Conjecture \ref{conj:ht} is true.
\end{thm}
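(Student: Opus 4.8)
The plan is to extract enough information from the earlier structural results to compute the $h$-vector of $R$ directly from the Hilbert series of the canonical module $\omega_R$, using the standard reciprocity relation. Recall that $R$ is a Cohen-Macaulay normal domain with $\dim R = 2\ell+2$ and $a$-invariant $a(R)=-3$, so if $H_R(t)=\sum_i h_i t^i / (1-t)^{2\ell+2}$ with $(h_0,\ldots,h_s)$ the $h$-vector ($s=2\ell-1$), then the Hilbert series of the canonical module satisfies $H_{\omega_R}(t) = (-1)^{2\ell+2} H_R(1/t) = H_R(1/t)$, which translates into $H_{\omega_R}(t) = \bigl(\sum_i h_{s-i} t^{i+3}\bigr)/(1-t)^{2\ell+2}$ since $\omega_R$ is generated in degrees starting at $3$. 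Thus the conjectured symmetry of $(h_0,\ldots,h_s)$ up to the sign-alternating correction terms is exactly a statement about the numerator polynomial of $H_{\omega_R}(t)$ minus the numerator polynomial coming from $(\image\varphi)(3)$.

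Concretely, I would proceed as follows. First, from Section~\ref{sec:ag} we have the direct sum decomposition $\omega_R = (\image\varphi)(-3) \oplus \bigoplus_{k=2}^{\ell-1} C_k$ of graded $R^{(0)}$-modules, where $R^{(0)}$ is a polynomial ring in $2\ell+1$ variables (Lemma~\ref{lem:isom poly}) and each $C_k \cong T^{\eta_k} R^{(0)}$ is free of rank $1$ generated in degree $\deg T^{\eta_k} = 2k+1$ (Lemma~\ref{lem:ck r0 free}). Hence the Hilbert series of $\bigoplus_{k=2}^{\ell-1} C_k$ is $\bigl(\sum_{k=2}^{\ell-1} t^{2k+1}\bigr)/(1-t)^{2\ell+1}$. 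On the other hand $(\image\varphi)(-3) = T^{\eta_1}R$ is isomorphic as a graded $R$-module to $R(-3)$, since $T^{\eta_1}$ is a nonzerodivisor (we are in a domain), so its Hilbert series is $t^3 H_R(t)$. Adding these and equating with $H_{\omega_R}(t) = t^3 H_R(1/t)$ gives a functional equation
$$
t^3 H_R(1/t) = t^3 H_R(t) + \frac{\sum_{k=2}^{\ell-1} t^{2k+1}}{(1-t)^{2\ell+1}}.
$$
Clearing denominators and writing $H_R(t) = P(t)/(1-t)^{2\ell+2}$ with $P(t)=\sum_{i=0}^{2\ell-1} h_i t^i$, this becomes a polynomial identity: $P(1/t)\,t^{2\ell-1} = P(t) + (1-t)\sum_{k=2}^{\ell-1} t^{2k-2}$ (after tracking the factor $(1-t)^{-(2\ell+1)}$ versus $(1-t)^{-(2\ell+2)}$ and the substitution $t \mapsto 1/t$ carefully, including the $(-1)^{2\ell+2}=1$ sign and degree bookkeeping). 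Reading off coefficients of $t^j$ on both sides yields $h_{2\ell-1-j} = h_j + [\text{coefficient of } t^{j}\text{ in } (1-t)\sum_{k=2}^{\ell-1}t^{2k-2}]$, and that correction coefficient is $+1$ when $j = 2k-2$ (i.e. $j$ even, $2\le j/2+1\le \ell-1$), $-1$ when $j=2k-1$, and $0$ otherwise; after reindexing by the distance from the center one checks this is precisely the pattern $h_{\ell-1}+(-1)^{\ell-1}, h_{\ell-2}+(-1)^{\ell-2},\ldots,h_3-1,h_2+1,h_1,1$ claimed in Conjecture~\ref{conj:ht}.

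The step I expect to be the main obstacle is the careful bookkeeping in the last paragraph: getting the denominators, the degree shifts ($(-3)$ versus $(3)$), and the reciprocity sign exactly right, and then verifying that the sequence of $\pm 1$ corrections lands on the correct indices with the correct alternating signs relative to the center of the $h$-vector. In particular one must double-check that $C_k$ contributes in degree $2k+1$ (not $2k-1$) and that the range $2 \le k \le \ell-1$ produces corrections at positions $\ell-1,\ell-2,\ldots,2$ counted from the middle — an off-by-one or a sign flip here would give a superficially similar but wrong formula. A secondary point to verify is that $\cok\varphi$, being Ulrich of rank... rather, that the decomposition above is indeed a decomposition of graded modules over the \emph{standard-graded} polynomial ring $R^{(0)}$ with the degree conventions $\deg T^{\mu_i}=1$, so that the Hilbert series manipulations are legitimate; this is exactly what Lemmas~\ref{lem:isom poly} and~\ref{lem:ck r0 free} provide. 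Once the polynomial identity is established, the theorem follows by simply comparing coefficients.
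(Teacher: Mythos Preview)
Your approach is essentially identical to the paper's: both compute $H(\omega_R,\lambda)$ in two ways, once via Stanley reciprocity and once via the decomposition $\omega_R=(\image\varphi)(-3)\oplus\bigoplus_{k=2}^{\ell-1}C_k$ from Section~\ref{sec:ag}, and then read off the relations $h_s=h_0$, $h_{s-1}=h_1$, $h_{s-i}=h_i+(-1)^i$ for $2\le i\le 2\ell-3$. The paper phrases the middle step as the Hilbert-series identity $H(\omega_R(3),\lambda)=H(R,\lambda)+H(\cok\varphi,\lambda)$ coming from the short exact sequence $0\to R\xrightarrow{\varphi}\omega_R(3)\to\cok\varphi\to 0$, which is just your decomposition shifted by $3$; your polynomial identity $t^{2\ell-1}P(1/t)=P(t)+(1-t)\sum_{k=2}^{\ell-1}t^{2k-2}$ is exactly what results. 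One small slip to fix in your write-up: the line ``equating with $H_{\omega_R}(t)=t^3H_R(1/t)$'' should read $H_{\omega_R}(t)=H_R(1/t)$ (you had it right two sentences earlier); with that correction the derivation of the polynomial identity goes through as you intend.
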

\begin{proof}
We use the notation of the previous section.
Also for a finitely generated graded $R$-module $M$,
we denote by $H(M,\lambda)$ the Hilbert series of $M$, i.e.,
$$
H(M,\lambda)=\sum_{n\in\ZZZ}(\dim_\KKK M_n)\lambda^n.
$$
Then 
$$
H(R,\lambda)=\frac{h_0+h_1\lambda+\cdots+h_s\lambda^s}{(1-\lambda)^{2\ell+2}}.
$$
Further, by the second proof of \cite[Theorem 4.1]{sta2}, we see that
$$
H(\omega_R(3),\lambda)=\frac{h_s+h_{s-1}\lambda+\cdots+h_0\lambda^s}{(1-\lambda)^{2\ell+2}}.
$$
On the other hand, since $C_k$ is a rank 1 free $R^{(0)}$-module with basis
$T^{\eta_k}$ and $\deg T^{\eta_k}=2k+1$, we see that
$$
H(C_k,\lambda)=\frac{\lambda^{2k+1}}{(1-\lambda)^{2\ell+1}}
$$
as $R^{(0)}$-modules
for $2\leq k\leq \ell-1$.
Further, since $\cok\varphi=\left(\bigoplus_{k=2}^{\ell-1}C_k\right)(3)$,
we see that
$$
\dim_\KKK(\cok\varphi)_n=\sum_{k=2}^{\ell-1}\dim_\KKK(C_k)_{n+3}
$$
for any $n\in\ZZZ$. Thus
\begin{eqnarray*}
H(\cok\varphi,\lambda)&=
&\frac{\lambda^2+\lambda^4+\cdots\lambda^{2\ell-4}}{(1-\lambda)^{2\ell+1}}\\
&=&\frac{\lambda^2-\lambda^3+\lambda^4-\lambda^5+\cdots+\lambda^{2\ell-4}-\lambda^{2\ell-3}}
{(1-\lambda)^{2\ell+2}}.
\end{eqnarray*}

Since 
$$
H(\omega_R(3),\lambda)=H(R,\lambda)+H(\cok\varphi,\lambda),
$$
we see that 
$$
h_s=h_0, \quad h_{s-1}=h_1 \quad\mbox{ and }\quad
h_{s-i}=h_i+(-1)^i\quad \mbox{for $2\leq i\leq 2\ell-3$}.
$$
The assertion follows from these equations, since $h_0=1$.
\end{proof}



%


\begin{thebibliography}{GMP}

\bibitem[BH]{bh}
Bruns, W. and Herzog, J.:
\cm\ rings,
Cambridge university press,
(No. 39), (1998).

\bibitem[Die]{die}
Diestel, R.:
Graph Theory (Fifth Edition),
Springer, Berlin, Germany
(GTM, volume 173)
(2017).

\bibitem[GTT]{gtt}
Goto, S., Takahashi, R. and Taniguchi, N.:
{\it Almost Gorenstein rings-towards a theory of higher dimension},
Journal of Pure and Applied Algebra,
219, 
(2015)
2666--2712.

\bibitem[HHS]{hhs}
Herzog, J., Hibi, T. and Stamate, D. I.:
{\it The trace of the canonical module},
Isr. J. Math., {\relax233}  (2019), 133--165.  

\bibitem[HT]{ht}
Hibi, T. and Tsuchiya, A.:
{\it Odd Cycles and Hilbert Functions of Their Toric Rings},
Mathematics, {\relax8(1)}, (2020), 22.  



\bibitem[Hoc]{hoc}
Hochster, M.:
{\it Rings of invariants of tori, Cohen-Macaulay rings generated
by monomials and polytopes},
Ann. of Math., {\relax 96}, (1972), 318-337.  
%


%
%

\bibitem[Mah]{mah}
Mahjoub, A.R.:
{\it
 On the stable set polytope of a series-parallel graph},
 Math. Programm. 40, (1988),  53--57.

\bibitem[Miy]{miy2}
Miyazaki, M.:
{\it On the Gorenstein property of the Ehrhart ring of 
the stable set polytope of an h-perfect graph},
International Electronic Journal of Algebra,
30, (2021), 269-284.

\bibitem[OH]{oh2}
Ohsugi, H. and Hibi, T.:
{\it Special simplices and Gorenstein toric rings},
 J. Combin. Theory Ser. A,  {\relax113},   (2006), 718--725.


%
\bibitem[Sta]{sta2}
Stanley, R. P.:
{\it Hilbert Functions of Graded Algebras},
Adv. Math., {\relax 28}  (1978), 57--83.
%
%
%
%
%
%
\end{thebibliography}
\end{document}